\numberwithin{equation}{section}
\theoremstyle{definition}
\newtheorem{lemma}{Lemma}[section]
\newtheorem*{claim}{Claim}
\newtheorem{prop}[lemma]{Proposition}
\newtheorem{thm}[lemma]{Theorem}
\newcommand{\C}{\mathbb{C}}
\newcommand{\N}{\mathbb{N}}
\newcommand{\Z}{\mathbb{Z}}
\def\a{\alpha}
\def\O{\Omega}
\begin{document}
\title{New simple modules for the $W$-algebra $W(2,2)$}

\author{Hongjia Chen}
\author{Dashu Xu\footnote{Corresponding author: dox@mail.ustc.edu.cn}}
\affil{School of Mathematical Sciences, University of Science and Technology of China, Hefei 230026, Anhui, P. R. China}

\date{}
\maketitle
\begin{abstract}
In this paper, we construct a novel class of simple modules for the $W$-algebra $W(2,2)$. 
Our approach involves taking tensor products of finitely many non-weight simple modules $\Omega(\lambda,\a,h)$ 
with an arbitrary simple restricted module. 
We provide a necessary and sufficient condition for these modules to be simple, and subsequently determine their isomorphism classes.
Through a comparative analysis with other known simple modules in the literature, we establish that these constructed modules are generically new.

\bigskip
\noindent {\em Key words: $W$-algebra $W(2,2)$; Simple module; Tensor product; Restricted module.}

\end{abstract}

\section{Introduction}
The $W$-algebra $W(2,2)$ is an infinite dimensional Lie algebra arising naturally in the context of vertex operator algebras\cite{Zhang&Dong}.
It also has certain physical interpretations \cite{MU}.
Denote this algebra by $\mathcal{W}$. 
It has a basis $\left\{ L_n, W_n, C\mid n\in\Z\right\}$ and satisfies the non-trivial relations
\begin{align*}
[L_m,L_n]&=(n-m)L_{m+n}+\delta_{m+n,0}\frac{m^3-m}{12}C,\\
[L_m,W_n]&=(n-m)W_{m+n}+\delta_{m+n,0}\frac{m^3-m}{12}C.
\end{align*}
The subalgebra generated by $\left\{L_n\mid n\in\Z\right\}$ is the so-called Virasoro Lie algebra.
The algebra $\mathcal{W}$ is an extension of the Virasoro algebra, whereas, its representation theory is rather different from that of the Virasoro algebra. 
For example, it was shown in \cite{Radobolja} that subsingular vectors may exist in Verma modules of $\mathcal{W}$.

Extensive studies have been conducted on the Lie algebra $\mathcal{W}$ from various perspectives. 
In terms of structural theory, various operators on the algebra $\mathcal{W}$, such as the derivations, 
local derivations, 2-local derivations and automorphism groups of $\mathcal{W}$ have been explicitly determined in \cite{Gao&Jiang&Pei,Wu&Gao&Liu,Tang}.
In addition, the pre-Lie algebra structures on $\mathcal{W}$ subject to some compatible conditions are completely classified in \cite{Chen&Li}.
Regarding the representation theory, Zhang and Dong defined the Verma module of $\mathcal{W}$ in \cite{Zhang&Dong}. 
They gave a necessary and sufficient condition for a Verma module to be simple, initiating the study on weight modules of $\mathcal{W}$. 
Following this, Radobolja delved into the structures of the Verma modules and investigated the tensor products of weight modules of $\mathcal{W}$, 
yielding weight $\mathcal{W}$-modules with infinite dimensional weight spaces \cite{Radobolja}.
Based on the $A$-cover theory developed by Billing and Furtony in \cite{B&F}, Cai, L\"{u} and Wang classified simple Harish-Chandra modules of a Lie algebra related to the Virasoro algebra, 
which includes the algebra $\mathcal{W}$ as an example.
Motivated by the structural features of $\mathcal{W}$, Wang systematically studied the Whittaker modules of a class of graded Lie algebras with $\mathcal{W}$ being a special case.
Adamović and Radobolja found interesting applications of vertex algebra theory in the study of representations of $\mathcal{W}$. 
They proved the $W(2,2)$ vertex operator algebra can be embedded into the twisted Heisenberg-Virasoro vertex operator algebra \cite{A&R}.
Subsequently, they gave a complete description of the structures of the simple highest weight modules of the twisted Heisenberg-Virasoro algebra as $\mathcal{W}$-modules \cite{AR}.
In the vein of the work by Mazorchuk and Zhao in \cite{Ma&Zhao}, Chen, Hong and Su constructed a family of simple restricted modules over $\mathcal{W}$, which include the simple highest weight modules and Whittaker modules. They also characterized the properties of these constructed modules conceptually \cite{Chen&Hong&Su}.
The study of representations of $\mathcal{W}$ is also conducive to the research on that of the Virasoro algebra.
In \cite{C&G}, Chen and Guo constructed a new family of Virasoro modules arising from the $U(\C L_0\oplus\C W_0)$-free modules of the algebra $\mathcal{W}$ in \cite{Chen&Guo}.
For more research on the representations of the algebra $\mathcal{W}$, we refer the readers to \cite{Chen&Guo,Jiang&Pei&Zhang,Liu&Gao&Zhu,Zhang&Tan,Wang&Li}
and the references therein.

This paper is concerned with the representations of the Lie algebra $\mathcal{W}$.
Our target is to construct new, simple modules for the algebra $\mathcal{W}$. 
Initially, we explore the tensor products of arbitrary finite non-weight simple modules,
as constructed in \cite{Chen&Guo}, with an arbitrary simple restricted module. 
We establish a criterion for such a module to be simple and determine their isomorphism classes. 
Subsequently, we prove that these modules are generically not isomorphic to other known simple modules documented in the literature, 
implying that they are indeed new simple modules of the algebra $\mathcal{W}$.

Throughout this paper, we use $\N$, $\Z$, $\C$ and $\C^*$ to denote the sets of non-negative integers, integers, complex numbers and non-zero complex numbers respectively.
Additionally, for any arbitrary Lie algebra $\mathfrak{g}$, we employ $U(\mathfrak{g})$ to denote its universal enveloping algebra.

\section{Preliminaries}

In this section, we recall some facts about the algebra $W(2,2)$. 

The Lie algebra $\mathcal{W}$ is a $\Z$-graded algebra with a decomposition
\[
\mathcal{W}=\bigoplus_{n\in\Z}\mathcal{W}_n,\quad
[\mathcal{W}_n,\mathcal{W}_m]\subset \mathcal{W}_{n+m},
\]
where for each $n\in\Z$, $\mathcal{W}_n=\C L_n\bigoplus\C W_n\bigoplus\delta_{n0}\C C$.

A $\mathcal{W}$-module $V$ is called a \textit{restricted module} if for every $v\in V$, there exists a sufficiently large integer $i_v \in\N$ such that $\mathcal{W}_n v=0$ for all $n\ge i_v$.
Note that there are some explicitly constructed simple restricted $\mathcal{W}$-modules in \cite{Zhang&Dong,Wang&Li,Chen&Hong&Su}.
\iffalse
Denote the subalgebra $\bigoplus_{n>0}\mathcal{W}_n$ by $\mathcal{W}_+$. Recall that a Lie algebra homomorphism $\theta:\mathcal{W}_+\to\C$ is called \textit{non-singular} if $\theta(L_n)\theta(W_n)\ne0$ for $n=1,2$. A $\mathcal{W}$-module $V$ is called a \textit{Whittaker module} of type $\theta$ if $V=U(\mathcal{W})v$, where $xv=\theta(x)v$ for all $x\in \mathcal{W}_+$. For a non-singular Lie algebra homomorphism $\theta:\mathcal{W}_+\to\C$, define a one-dimensional $\mathcal{W}_+$-module $\C_{\theta}:=\C$ by $x\gamma=\theta(x)\gamma$ for $x\in \mathcal{W}_+$ and $\gamma\in\C_{\theta}$. Then the induced module
\[
M_{\theta}=U(\mathcal{W})\bigotimes_{U(\mathcal{W}_+)}\C_{\theta}
\]
is a Whittaker module of type $\theta$. For $\xi\in\C$, it is obvious that $(C-\xi)M_{\theta}$ is a submodule of $M_{\theta}$. The quotient module $M_{\theta}/(C-\xi)M_{\theta}$ and the vector $1\otimes1+(C-\xi)M_{\theta}$ will be denoted by $V_{\theta,\xi}$ and $\mathbf{1}\in V_{\theta,\xi}$ respectively. In \cite{Wang,Wang&Li}, Wang and Li proved the following theorem:
\begin{thm}[Theorem 4.2 in \cite{Wang&Li}]\label{Whittater}
For any $\xi\in\C$, the module $V_{\theta,\xi}$ is simple. Moreover, any simple Whittaker module of type $\theta$ is isomorphic to $V_{\theta,\xi}$ for some $\xi\in\C$.
\end{thm}
It is helpful to point out that any simple Whittaker module $V_{\theta,\xi}$ is a restricted module.
\fi

For $\lambda\in\C^*$, $\a\in\C$ and $h(t)\in\C[t]$. Denote by $\O(\lambda,\a)=\C[s]$ and $\O(\lambda,\a,h)=\C[s,t]$, respectively. Define the actions of $\mathcal{W}$ on $\O(\lambda,\a)$ as
\begin{align*}
L_n(s^p)&=\lambda^n(s-n\a)(s-n)^p, \quad  W_n(s^p)=C(s^p)=0,
\end{align*}
where $p\in\N$. 
Then by Theorem 3 in \cite{Chen&Guo}, we know that $\O(\lambda,\a)$ is a $\mathcal{W}$-module. 
Moreover, $\O(\lambda,\a)$ is simple if and only if $\a\in\C^*$.
Set $g(t)=\dfrac{h(t)-h(\a)}{t-\a}$ and define linear maps
\begin{equation}\label{F}
\begin{split}
F:\C[t]&\to\C[t]\\
f(t)&\mapsto g(t)f(t)-f'(t)
\end{split}
\end{equation}
and
\begin{equation}\label{G}
\begin{split}
G:\C[t]&\to\C[t]\\
f(t)&\mapsto tF\big(f(t)\big)+h(\a)f(t)
\end{split}
\end{equation}
where $f'(t)$ is the derivative of $f(t)$.
Now, we define the actions of $\mathcal{W}$ on $\O(\lambda,\a,h)$ as
\begin{align*}
L_n(s^pt^q)&=\lambda^n(s-n)^p\big(st^q+nG(t^q)-n^2\a F(t^q)\big),\\
W_n(s^pt^q)&=\lambda^n(t-n\a)(s-n)^pt^q,\\
C(s^pt^q)&=0,
\end{align*}
where $p,q\in\N$. Then it follows from Proposition 3.1 in \cite{Chen&Guo} that $\O(\lambda,\a,h)$ is a $\mathcal{W}$-module. Furthermore, $\O(\lambda,\a,h)$ is simple if and only if $\a\in\C^*$.

The following proposition determines the homomorphisms (not necessarily isomorphisms) between the modules $\O(\lambda,\a,h)$, constituting a slight generalization of Proposition 3.2 from \cite{Chen&Guo}.
%The homomorphisms (not necessarily isomorphisms) between the modules $\O(\lambda,\a,h)$ are determined in the following proposition, which is a slight generalization of Proposition 3.2 in \cite{Chen&Guo}.
%The following proposition is a slight generalization of Proposition 3.2 in \cite{Chen&Guo}, where the homomorphisms (not necessarily isomorphisms) between the modules $\O(\lambda,\a,h)$ are determined.
For $f(t)\in\C[t]$, we define
\[
\chi(f)=\left\{\begin{matrix}
	1&f\in\mathbb{N}, \\
	0&f\notin\mathbb{N}.
\end{matrix}\right.
\]
\begin{prop}
We have
\[
\begin{split}
\mathrm{Hom}_{\mathcal{W}}\left(\O(\lambda_1,\alpha_1,h_1),\O(\lambda_2,\alpha_2,h_2)\right)
=&\;\delta_{\lambda_1\lambda_2}\delta_{\alpha_1\alpha_2}\delta_{h_1h_2}(1-\delta_{\alpha_10})\C\mathbf{id}_{\C[s,t]}\\
&+\delta_{\lambda_1\lambda_2}\delta_{\alpha_1\alpha_2}\delta_{\alpha_10}\chi(h_2-h_1)\C t^{h_2(t)-h_1(t)}.
\end{split}
\]
\end{prop}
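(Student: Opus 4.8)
The plan is to compute $\mathrm{Hom}_{\mathcal{W}}\big(\O(\lambda_1,\a_1,h_1),\O(\lambda_2,\a_2,h_2)\big)$ directly, exploiting that both modules share the underlying space $\C[s,t]$. The starting point is that on either module $L_0$ acts as multiplication by $s$ and $W_0$ as multiplication by $t$, independently of the parameters. Any $\phi$ in this Hom space commutes with $L_0$ and $W_0$, hence with multiplication by every element of $\C[s,t]$; since $\C[s,t]$ is free of rank one over itself, $\phi$ must be multiplication by the single polynomial $\psi:=\phi(1)$. Everything then reduces to deciding which $\psi\in\C[s,t]$ make this multiplication commute with all $W_n$ and $L_n$ (the central element $C$ acts as $0$ on both sides and imposes nothing).

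Next I would impose the $W_n$-intertwining. Since $W_n$ acts by $W_n(f)(s,t)=\lambda^n(t-n\a)f(s-n,t)$, the condition $\phi W_n^{(1)}=W_n^{(2)}\phi$ is equivalent to the identity $\lambda_1^n(t-n\a_1)\,\psi(s,t)=\lambda_2^n(t-n\a_2)\,\psi(s-n,t)$ for all $n\in\Z$. Comparing top-degree coefficients in $s$ on both sides (note $\psi$ and $\psi(s-n,t)$ have the same $s$-leading coefficient) forces $\lambda_1^n(t-n\a_1)=\lambda_2^n(t-n\a_2)$ for every $n$, whence $\lambda_1=\lambda_2=:\lambda$ and $\a_1=\a_2=:\a$. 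The leftover identity $(t-n\a)\big(\psi(s,t)-\psi(s-n,t)\big)=0$ then forces $\psi$ to be independent of $s$, so $\psi=\psi(t)\in\C[t]$. This accounts for the factors $\delta_{\lambda_1\lambda_2}\delta_{\a_1\a_2}$ in both summands.

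With $\psi=\psi(t)$, I would turn to the $L_n$. Substituting the action and using that $\psi$ involves only $t$, the two multiplication-by-$s$ terms cancel and the intertwining condition becomes a degree-$\le 2$ polynomial identity in $n$; equating the coefficients of $n$ and $n^2$ gives, for all $u\in\C[t]$, the pair $\psi\,G_1(u)=G_2(\psi u)$ and $\a\big(\psi\,F_1(u)-F_2(\psi u)\big)=0$, with $F_i,G_i$ as in (\ref{F}) and (\ref{G}) for $h_i$. Expanding these via $F_i(u)=g_i u-u'$ and $G_i(u)=tF_i(u)+h_i(\a)u$ and cancelling the common $u'$-terms reduces them to scalar relations in $\psi,\psi'$; I expect this bookkeeping and the ensuing split into the cases $\a\neq0$ and $\a=0$ to be the crux of the argument.

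Finally I would solve the surviving relations in each case. For $\a\neq0$ they read $\psi'=\psi(g_2-g_1)$ and $\psi\big(h_2(\a)-h_1(\a)\big)=0$; since $\psi\neq0$ the latter gives $h_1(\a)=h_2(\a)$, and a degree count in the former (the right side has degree $\ge\deg\psi$ unless $g_1=g_2$, while $\psi'$ has smaller degree) forces $g_1=g_2$ and $\psi$ constant, i.e. $h_1=h_2$ and $\phi\in\C\,\mathbf{id}$, the first summand. For $\a=0$ the operators simplify to $G_i(u)=h_i(t)u-tu'$, the $F$-relation is vacuous, and the surviving relation is $t\psi'=(h_2-h_1)\psi$. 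Writing $t\psi'/\psi$ in partial fractions shows $\psi$ can have no nonzero root, so $\psi=c\,t^{k}$, and substituting back gives $h_2-h_1=k\in\N$; this is exactly $\chi(h_2-h_1)=1$ with $\phi\in\C\,t^{h_2(t)-h_1(t)}$, the second summand. Each implication above is reversible, so these necessary conditions are also sufficient, yielding the stated equality. The hardest point is the $\a=0$ analysis, as it is precisely where the non-invertible homomorphisms $f\mapsto t^kf$ ($k>0$) arise and must be distinguished from isomorphisms.
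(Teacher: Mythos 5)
Your proposal is correct and follows essentially the same route as the paper's proof: reduce $\phi$ to multiplication by $\phi(1)$ via the $L_0$, $W_0$ actions, use the $W_n$-intertwining with a leading-coefficient argument to force $(\lambda_1,\alpha_1)=(\lambda_2,\alpha_2)$ and $s$-independence, then compare the coefficients of $n$ and $n^2$ in the $L_n$-condition and split into the cases $\alpha\neq0$ and $\alpha=0$. The only differences are cosmetic (you use all $W_n$ where the paper uses just $W_1$, and you make the sufficiency check explicit via reversibility where the paper appeals to direct computation).
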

\begin{proof}
Suppose $\Phi:\O(\lambda_1,\alpha_1,h_1)\to\O(\lambda_2,\alpha_2,h_2)$ is a non-zero homomorphism and $\Phi(1)=v(s,t)$.
Then the actions of $L_0$ and $W_0$ indicate $\Phi(u(s,t))=u(s,t)v(s,t)$ for every $u(s,t)\in\O(\lambda_1,\alpha_1,h_1)$.
From the equation $\Phi(W_11)=W_1\Phi(1)$, we deduce that $\lambda_1(t-\alpha_1)v(s,t)=\lambda_2(t-\alpha_2)v(s-1,t)$.
Assume that 
\[
v(s,t)=\sum_{k=0}^{n}v_k(t)s^k,
\]
where $v_k(t)\in\C[t]$ and $v_n(t)\ne0$.
Then we have
\[
\lambda_1(t-\alpha_1)v_n(t)=\lambda_2(t-\alpha_2)v_n(t).
\]
It follows that $(\lambda_1,\alpha_1)=(\lambda_2,\alpha_2)$, and hence $v(s,t)=v(t)\in\C[t]$.
Denote by $\alpha:=\alpha_1=\alpha_2$.
For each $m\in\Z$, we have $\Phi(L_m1)=L_m\Phi(1)$, which implies
\begin{equation}\label{fln}
\left(m(h_1(t)-h_2(t))-m(m-1)\alpha(g_1(t)-g_2(t))\right)v(t)=m(m\alpha-t)v'(t),
\end{equation}
where $g_i(t)=\dfrac{h_i(t)-h_i(\alpha)}{t-\alpha}$ for $i=1,2$.
Comparing the coefficients of $m^2$ on both sides, we have
\[
\alpha(g_2(t)-g_1(t))v(t)=\alpha v'(t).
\]
If $\alpha\ne0$, then we have $g_1(t)=g_2(t)$ and $v(t)\in\C^*$.
Hence, it is straightforward to see $h_1(t)=h_2(t)$.
If $\alpha=0$, then equation \eqref{fln} becomes $tv'(t)=(h_2(t)-h_1(t))v(t)$.
It follows that $h_2(t)-h_1(t)=n\in\N$ and $v(t)\in\C^*t^n$.
It remains to prove $\Phi(u(s,t))=u(s,t)t^n$ is a homomorphism. 
This can be verified by direct computations.
\end{proof}
\section{Simple tensor product modules}
Let $m$ be a positive integer. 
For $1\le k\le m$, suppose $\lambda_k,\a_k\in\C^*$ and $h_k(t_k)\in\C[t_k]$. 
Then as a vector space, we have $\O(\lambda_k,\a_k,h_k)=\C[s_k,t_k]$. 
Let $g_k(t_k)=\dfrac{h_k(t_k)-h_k(\alpha_k)}{t_k-\a_k}$. 
Define the linear maps $F_k:\C[t_k]\to\C[t_k]$ and $G_k:\C[t_k]\to\C[t_k]$ as in \eqref{F} and \eqref{G} respectively. 
For a simple restricted $\mathcal{W}$-module $V$, we have the module of tensor product:
\[
\mathbf{T}=\bigotimes_{k=1}^{m}\O(\lambda_k,\a_k,h_k)\otimes V.
\]
For a non-zero element
\begin{equation}\label{f}
g=\sum_{(\textbf{p},\textbf{q})\in E}s_1^{p_1}t_1^{q_1}\otimes \cdots \otimes  s_m^{p_m}t_m^{q_m}\otimes v_{(\textbf{p},\textbf{q})}
\end{equation}
of $\mathbf{T}$, where $(\textbf{p},\textbf{q})=(p_1,\ldots,p_m,q_1,\ldots,q_m)$, $0\ne v_{(\textbf{p},\textbf{q})}\in V$ for all $(\textbf{p},\textbf{q})\in E$ and $E$ is a finite set, we shall simply use $\textbf{s}^{\textbf{p}}\textbf{t}^{\textbf{q}}$ to denote $s_1^{p_1}t_1^{q_1}\otimes \cdots \otimes  s_m^{p_m}t_m^{q_m}$. Then \eqref{f} can be written as
\begin{equation*}
g=\sum_{(\textbf{p},\textbf{q})\in E} \textbf{s}^{\textbf{p}}\textbf{t}^{\textbf{q}}\otimes v_{(\textbf{p},\textbf{q})}.
\end{equation*}
Define $P_k=\mathrm{max}\left \{ p_k \mid (\textbf{p},\textbf{q}) \in E\right \}$ and $E_k=\left\{ (\textbf{p},\textbf{q}) \in E\mid p_k=P_k\right\}$ for $1\le k\le m$.
Recall that for any $n\ge 1$, the lexicographical order on $\Z^n$, a total order, is defined as:
\begin{equation*}
(a_1,\ldots,a_n)\succ(b_1,\ldots,b_n)   \; \Longleftrightarrow  \; 
\exists\, 1\le k\le n \;\mathrm{such}\; \mathrm{that}\; a_1=b_1,\ldots,a_{k-1}=b_{k-1}\; \mathrm{and}\; a_k>b_k.
\end{equation*}
Define the degree of $g$ to be the maximal $ (\textbf{p},\textbf{q})$ in $E$.

A crucial lemma for the arguments of this paper has been proved in \cite{Tan&Zhao}.
\begin{lemma}[Lemma 2 in \cite{Tan&Zhao}]
\label{crucial}
Suppose $\lambda_1,\ldots,\lambda_m\in\C^*$, $s_1,\ldots,s_m\in\Z_{\ge1}$ and $s_1+\cdots+s_m=s$.
For $n\in\Z$, $1\le t\le m$ and $s_1+\cdots+s_{t-1}+1\le k\le s_1+\cdots+s_t$, define $f_k(n)=n^{k-1-\sum_{j=1}^{t-1}s_j}\lambda_t^n$. 
Let $\mathfrak{R}=(y_{pq})$ be the $s\times s$ matrix with $y_{pq}=f_q(p+r-1)$, where $r\ge0$ and $p, q=1,2,\ldots,s$.
Then we have
\begin{equation*}
\det\mathfrak{R}=\prod_{j=1}^{m}(s_j-1)^{!!}\lambda_j^{{s_j(s_j+2r-1)}/{2}} \prod_{1\le i<j\le m}(\lambda_j-\lambda_i)^{s_is_j},
\end{equation*}
where $m^{!!}=m!\times(m-1)!\times\cdots\times 1!$ for $m\in\Z_{\ge 1}$ and $0^{!!}=1$.
\end{lemma}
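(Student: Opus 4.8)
The plan is to recognize $\mathfrak{R}$ as a confluent Vandermonde matrix in the variables $\lambda_1,\ldots,\lambda_m$ and to peel off the elementary factors by explicit determinant-preserving row and column operations, reducing everything to the classical confluent Vandermonde determinant. The key observation is that the Euler operator $\theta=\lambda\frac{d}{d\lambda}$ satisfies $\theta^{i}\lambda^{x}=x^{i}\lambda^{x}$, so that each entry can be written as
\[
y_{pq}=f_q(p+r-1)=(p+r-1)^{i}\lambda_t^{\,p+r-1}=\big(\theta^{i}\lambda^{x}\big)\big|_{\lambda=\lambda_t,\,x=p+r-1},
\]
where $t$ is the index of the block containing $q$ and $i=q-1-\sum_{j<t}s_j\in\{0,\ldots,s_t-1\}$. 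Viewed this way, the columns lying in the $t$-th block are the successive Euler derivatives of $\lambda^{x}$ at the node $\lambda_t$, and the rows correspond to the consecutive exponents $x=r,r+1,\ldots,r+s-1$.

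First I would remove the dependence on $r$ coming from the polynomial part. As functions of the exponent $x$, the columns in the $t$-th block span $\{P(x)\lambda_t^{x}:\deg P<s_t\}$; replacing the monomial basis $x^{i}$ by $(x-r)^{i}$ is a unit upper-triangular change of basis, hence a determinant-preserving sequence of column operations. After this the $(p,q)$-entry becomes $(p-1)^{i}\lambda_t^{\,p+r-1}$. Since $\lambda_t^{\,p+r-1}=\lambda_t^{r}\lambda_t^{\,p-1}$ and $\lambda_t^{r}$ is constant along each column, I can factor $\lambda_t^{r}$ out of each of the $s_t$ columns of the $t$-th block, extracting $\prod_{t}\lambda_t^{\,rs_t}$ and reducing to the case $r=0$, whose matrix $M$ has entry $(p-1)^{i}\lambda_t^{\,p-1}$.

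Next I would convert the Euler derivatives into ordinary ones. Using the identity $\theta^{i}=\sum_{j=0}^{i}S(i,j)\lambda^{j}\partial^{j}$, where $S(i,j)$ are the Stirling numbers of the second kind and $S(i,i)=1$, each column of $M$ lying in block $t$ is a lower-triangular combination of the columns $\big(\partial^{j}\lambda^{x}\big)|_{\lambda=\lambda_t}$, $0\le j\le i$, the change of basis being block-diagonal with diagonal entries $S(i,i)\lambda_t^{i}=\lambda_t^{i}$. This contributes the factor $\prod_{t}\prod_{i=0}^{s_t-1}\lambda_t^{\,i}=\prod_{t}\lambda_t^{\,s_t(s_t-1)/2}$ and leaves the standard confluent (Hermite) Vandermonde matrix $M'$ whose columns are the derivatives $\big(\partial^{j}\lambda^{x}\big)|_{\lambda=\lambda_t}$, $0\le j<s_t$, evaluated at the exponents $x=0,1,\ldots,s-1$. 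Its determinant is classical,
\[
\det M'=\prod_{t=1}^{m}\Big(\prod_{j=0}^{s_t-1}j!\Big)\prod_{1\le i<j\le m}(\lambda_j-\lambda_i)^{s_is_j}=\prod_{t=1}^{m}(s_t-1)^{!!}\prod_{1\le i<j\le m}(\lambda_j-\lambda_i)^{s_is_j},
\]
and multiplying back the two collected scalar factors $\prod_t\lambda_t^{\,rs_t}$ and $\prod_t\lambda_t^{\,s_t(s_t-1)/2}$ yields the asserted value, since $rs_t+s_t(s_t-1)/2=s_t(s_t+2r-1)/2$.

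The main obstacle is the evaluation of $\det M'$, that is, establishing both the superfactorial constants $(s_t-1)^{!!}$ and, above all, the vanishing of the determinant to exact order $s_is_j$ along $\lambda_i=\lambda_j$. I would obtain this either by a confluence argument --- starting from the ordinary Vandermonde determinant $\prod_{k<l}(x_l-x_k)$ for $s$ distinct nodes, letting $s_t$ of them coalesce to $\lambda_t$, and passing to the limit after forming the appropriate divided differences --- or by the standard polynomial argument, treating $\det M'$ as a polynomial in the $\lambda_t$, bounding its degree, forcing the factors $(\lambda_j-\lambda_i)^{s_is_j}$ by an order-of-vanishing count, and pinning down the leading constant by induction on $s$. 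Everything before this point is bookkeeping of determinant-preserving operations together with two harmless scalar extractions, so the genuine content is entirely concentrated in the classical confluent Vandermonde evaluation.
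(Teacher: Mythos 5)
Your proposal is correct, but there is no in-paper argument to compare it with: the paper does not prove Lemma \ref{crucial} at all, it imports the statement verbatim as Lemma 2 of \cite{Tan&Zhao}. Judged on its own merits, your reduction is sound. Writing $y_{pq}=(p+r-1)^i\lambda_t^{p+r-1}$ with $i=q-1-\sum_{j<t}s_j$, the recentering $x^i\mapsto (x-r)^i$ is a unit-triangular column operation within each block; extracting $\lambda_t^{r}$ from the $s_t$ columns of block $t$ yields $\prod_t\lambda_t^{rs_t}$; and the operator identity $(\lambda\partial)^i=\sum_{j\le i}S(i,j)\lambda^j\partial^j$ with $S(i,i)=1$ gives a block upper-triangular change of basis contributing $\prod_t\lambda_t^{s_t(s_t-1)/2}$ and reducing the matrix to the confluent Vandermonde $M'$ with columns $\partial^j\lambda^x\vert_{\lambda=\lambda_t}$, $x=0,\ldots,s-1$. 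The exponent bookkeeping $rs_t+s_t(s_t-1)/2=s_t(s_t+2r-1)/2$ and the constant $\prod_{j=0}^{s_t-1}j!=(s_t-1)^{!!}$ then reproduce the asserted formula exactly, including signs. The one spot that is a sketch rather than a proof is the evaluation $\det M'=\prod_t(s_t-1)^{!!}\prod_{i<j}(\lambda_j-\lambda_i)^{s_is_j}$: the order-of-vanishing argument you mention needs care, since for confluent blocks no two columns of $M'$ literally coincide when $\lambda_i=\lambda_j$, so the exponent $s_is_j$ is not immediate; the coalescence/divided-difference limit from the ordinary Vandermonde, which you also propose, is the safer of your two routes. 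Since that evaluation is a genuinely classical result, deferring to it is legitimate, and your argument stands as an essentially self-contained verification of a lemma the paper itself only cites.
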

\subsection{Simplicity}
In this subsection, we shall give a necessary and sufficient condition for the tensor product module to be simple.
First, the following lemmas are needed.
\begin{lemma}
Assume that $\lambda_1,\ldots,\lambda_m$ are pairwise distinct and $N$ is a submodule of $\mathbf{T}$. Then for
\[
0\ne g=\sum_{(\textbf{p},\textbf{q})\in E} \textbf{s}^{\textbf{p}}\textbf{t}^{\textbf{q}}\otimes v_{(\textbf{p},\textbf{q})}\in N
\]
 and $1\le k\le m$, we have
\begin{equation}\label{n}
\sum_{(\textbf{p},\textbf{q})\in E_k} \textbf{s}^{\textbf{p}-P_k e_k}\textbf{t}^{\textbf{q}}\otimes v_{(\textbf{p},\textbf{q})}\in N,
\end{equation}
\begin{equation}\label{nn}
\sum_{(\textbf{p},\textbf{q})\in E} \textbf{s}^{\textbf{p}}\textbf{t}^{\textbf{q}+e_k}\otimes v_{(\textbf{p},\textbf{q})}\in N,
\end{equation}
\begin{equation}\label{nnn}
\sum_{(\textbf{p},\textbf{q})\in E} \textbf{s}^{\textbf{p}+e_k}\textbf{t}^{\textbf{q}}\otimes v_{(\textbf{p},\textbf{q})}\in N,
\end{equation}
where $e_k=(\delta_{1k},\ldots,\delta_{mk})$.
\end{lemma}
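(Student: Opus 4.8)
The plan is to apply the generators $L_n$ and $W_n$ for large $n$ to $g$, use the restrictedness of $V$ to kill the action on the last tensor factor, and then read off the three displayed elements as coefficients of suitable monomials $n^d\lambda_k^n$ inside the resulting $\mathbf{T}$-valued functions of $n$. Since $E$ is finite and $V$ is restricted, there is an integer $M$ with $L_nv_{(\mathbf{p},\mathbf{q})}=W_nv_{(\mathbf{p},\mathbf{q})}=0$ for all $(\mathbf{p},\mathbf{q})\in E$ and all $n\ge M$. Writing $X^{(k)}_n$ for the action of $X_n$ on the $k$-th tensor factor ($X=L$ or $X=W$), the derivation property of the action on a tensor product gives, for $n\ge M$,
\[
X_ng=\sum_{(\mathbf{p},\mathbf{q})\in E}\ \sum_{k=1}^m s_1^{p_1}t_1^{q_1}\otimes\cdots\otimes X^{(k)}_n\big(s_k^{p_k}t_k^{q_k}\big)\otimes\cdots\otimes s_m^{p_m}t_m^{q_m}\otimes v_{(\mathbf{p},\mathbf{q})},
\]
the contribution $\mathbf{s}^{\mathbf{p}}\mathbf{t}^{\mathbf{q}}\otimes X_nv_{(\mathbf{p},\mathbf{q})}$ vanishing. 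Because each $X^{(k)}_n(s_k^{p_k}t_k^{q_k})$ equals $\lambda_k^n$ times a polynomial in $n$, the element $X_ng$ is, as a function of $n\ge M$, of the form $\sum_{k,d}n^d\lambda_k^nA^{X}_{k,d}$ for finitely many fixed $A^X_{k,d}\in\mathbf{T}$; and since $N$ is a submodule, every $X_ng$ lies in $N$.

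Next I would extract the individual coefficients $A^X_{k,d}$. Let $S$ be the total number of pairs $(k,d)$ occurring, and evaluate $X_ng$ at the $S$ consecutive integers $n=M,M+1,\dots,M+S-1$. Listing the functions $n\mapsto n^d\lambda_k^n$ grouped first by $k$ and then by increasing $d$, the linear system expressing $(X_ng)_n$ in terms of $(A^X_{k,d})$ has coefficient matrix exactly of the shape $\mathfrak{R}$ in Lemma \ref{crucial} (with $r=M$ and with $s_t$ the number of powers of $n$ attached to $\lambda_t$). As $\lambda_1,\dots,\lambda_m$ are pairwise distinct and nonzero, Lemma \ref{crucial} gives $\det\mathfrak{R}\ne0$, so the system is invertible and each $A^X_{k,d}$ is a $\C$-linear combination of the elements $X_ng\in N$. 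Hence every $A^X_{k,d}\in N$.

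Finally I would identify the three elements of the statement with particular coefficients $A^X_{k,d}$. Expanding
\[
L_n^{(k)}(s_k^{p_k}t_k^{q_k})=\lambda_k^n(s_k-n)^{p_k}\big(s_kt_k^{q_k}+nG_k(t_k^{q_k})-n^2\a_kF_k(t_k^{q_k})\big),
\]
its $n^0$-coefficient divided by $\lambda_k^n$ is $s_k^{p_k+1}t_k^{q_k}$; summing over $E$, the $n^0\lambda_k^n$-coefficient of $L_ng$ is precisely the element \eqref{nnn}. Likewise, from
\[
W_n^{(k)}(s_k^{p_k}t_k^{q_k})=\lambda_k^n(s_k-n)^{p_k}\big(t_k^{q_k+1}-n\a_kt_k^{q_k}\big),
\]
the $n^0\lambda_k^n$-coefficient of $W_ng$ is \eqref{nn}. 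For \eqref{n}, note that the highest power of $n$ attached to $\lambda_k^n$ in $W_ng$ is $n^{P_k+1}$, contributed only by the terms with $p_k=P_k$, i.e.\ by $E_k$; for such a term the coefficient of $n^{P_k+1}$ is $(-1)^{P_k+1}\a_kt_k^{q_k}$, with the $s_k$-degree dropped to $0$. Thus the $n^{P_k+1}\lambda_k^n$-coefficient of $W_ng$ equals $(-1)^{P_k+1}\a_k\sum_{(\mathbf{p},\mathbf{q})\in E_k}\mathbf{s}^{\mathbf{p}-P_ke_k}\mathbf{t}^{\mathbf{q}}\otimes v_{(\mathbf{p},\mathbf{q})}$, and since $\a_k\ne0$ we may divide by the scalar $(-1)^{P_k+1}\a_k$ to obtain \eqref{n}.

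The conceptual core is short, and I expect the main obstacle to be the bookkeeping in the second step: one must verify carefully that, after grouping by $\lambda_k$ and listing the powers of $n$ as $0,1,\dots$, the evaluation matrix genuinely coincides with the matrix $\mathfrak{R}$ of Lemma \ref{crucial}, so that its nonvanishing determinant can legitimately be invoked. The accompanying computations of the third step — pinning down precisely which power of $n$ produces each of the shifts $\mathbf{p}\mapsto\mathbf{p}+e_k$, $\mathbf{q}\mapsto\mathbf{q}+e_k$, and the top-degree extraction $\mathbf{p}\mapsto\mathbf{p}-P_ke_k$ — are routine expansions, but they are exactly where the hypotheses that the $\lambda_k$ are distinct and that $\a_k\ne0$ are used.
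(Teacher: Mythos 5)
Your proposal is correct and follows essentially the same route as the paper's proof: apply $W_n$ and $L_n$ for $n$ large enough to annihilate the $V$-factor, invoke Lemma \ref{crucial} to conclude that every coefficient of $n^d\lambda_k^n$ lies in $N$, and then read off \eqref{nn} and \eqref{n} as the $n^0$- and $n^{P_k+1}$-coefficients of $W_ng$ and \eqref{nnn} as the $n^0$-coefficient of $L_ng$, using $\a_k\ne0$ for \eqref{n}. The paper merely makes the extraction explicit via binomial expansions of $(s_k-n)^{p_k}$, which is the bookkeeping you anticipated; your identification of the evaluation matrix with $\mathfrak{R}$ (taking $r=M$ and $s_t$ the number of powers of $n$ attached to $\lambda_t$) is exactly how the determinant lemma is used there.
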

\begin{proof}
Choose a sufficiently large integer $i$ such that $\mathcal{W}_nv_{(\textbf{p},\textbf{q})}=0$ for all $n\ge i$ and $(\textbf{p},\textbf{q})\in E$. 
For $n\ge i$, by the action of $W_n$, we have
\begin{align*}
W_ng=
&\sum_{(\textbf{p},\textbf{q})\in E}\sum_{k=1}^{m}s_1^{p_1}t_1^{q_1}\otimes \cdots\otimes W_n(s_k^{p_k}t_k^{q_k}) \otimes\cdots\otimes  s_m^{p_m}t_m^{q_m}\otimes v_{(\textbf{p},\textbf{q})}\\
=&\sum_{(\textbf{p},\textbf{q})\in E}\sum_{k=1}^{m}s_1^{p_1}t_1^{q_1}\otimes \cdots\otimes \lambda_k^{n}(t_k-n\alpha_k)(s_k-n)^{p_k}t_k^{q_k} \otimes \cdots\otimes s_m^{p_m}t_m^{q_m}\otimes v_{(\textbf{p},\textbf{q})}\\
=&\sum_{(\textbf{p},\textbf{q})\in E}\sum_{k=1}^{m}\sum_{j=0}^{p_k+1}(-1)^jn^j\lambda_{k}^{n}s_1^{p_1}t_1^{q_1}\otimes \cdots\otimes \left (\binom{p_k }{ j} t_k
+\alpha_k\binom{p_k }{ j-1}s_k\right ) s_k^{p_k-j}t_k^{q_k} \otimes\cdots\otimes  v_{(\textbf{p},\textbf{q})}.
%=&\sum_{(\textbf{p},\textbf{q})\in E}\sum_{k=1}^{m}\sum_{j=0}^{p_k+1}(-1)^jn^j\lambda_{k}^{n}s_1^{p_1}t_1^{q_1}\otimes \cdots\otimes \left (\binom{p_k }{ j} s_k^{p_k-j}t_k^{q_k+1}
%+\alpha_k\binom{p_k }{ j-1}s_k^{p_k-j+1}t_k^{q_k} \right ) %\\[4pt]
%%&
%\otimes \cdots \otimes  s_m^{p_m}t_m^{q_m}\otimes v_{(\textbf{p},\textbf{q})}.
\end{align*}
Hence, from Lemma \ref{crucial}, we deduce that the elements
\begin{equation}\label{t}
\sum_{(\textbf{p},\textbf{q})\in E}s_1^{p_1}t_1^{q_1}\otimes \cdots\otimes 
\left ( \begin{pmatrix} p_k \\ j \end{pmatrix}s_{k}^{p_k-j}t_k^{q_k+1}+\alpha_k\begin{pmatrix} p_k \\ j-1 \end{pmatrix}s_k^{p_k-j+1}t_k^{q_k} \right )
\otimes \cdots \otimes  s_m^{p_m}t_m^{q_m}\otimes v_{(\textbf{p},\textbf{q})}
\end{equation}
belong to $N$ for $1\le k\le m$ and $0\le j\le P_k+1$. 
Taking $j=P_k+1$ in \eqref{t} and noting that $\a_k\ne0$, we see that \eqref{n} is true. 
Taking $j=0$ in \eqref{t}, we obtain \eqref{nn}. 
For $n\ge i$, the action of $L_n$ implies
\begin{align*}
L_ng
=&\sum_{(\textbf{p},\textbf{q})\in E}\sum_{k=1}^{m}s_1^{p_1}t_1^{q_1}\otimes \cdots\otimes L_n(s_k^{p_k}t_k^{q_k}) 
  \otimes \cdots\otimes s_m^{p_m}t_m^{q_m}\otimes v_{(\textbf{p},\textbf{q})}\\
=&\sum_{(\textbf{p},\textbf{q})\in E}\sum_{k=1}^{m}s_1^{p_1}t_1^{q_1}\otimes \cdots\otimes \lambda_k^{n}(s_k-n)^{p_k}
  \left(s_kt_k^{q_k}+nG_k(t_k^{q_k})-n^2\alpha_kF_k(t_k^{q_k})\right)\\
 &\otimes \cdots\otimes s_m^{p_m}t_m^{q_m}\otimes v_{(\textbf{p},\textbf{q})}\\
=&\sum_{(\textbf{p},\textbf{q})\in E}\sum_{k=1}^{m}\sum_{j=0}^{p_k+2}(-1)^jn^j\lambda_k^n s_1^{p_1}t_1^{q_1}\otimes \cdots \otimes
\left(\begin{pmatrix}  p_k\\j \end{pmatrix} s_k^{p_k-j+1}t_k^{q_k} - \begin{pmatrix} p_k \\ j-1 \end{pmatrix}s_k^{p_k-j+1}G_k(t_k^{q_k})\right.  \\
& \left.-\alpha_k\begin{pmatrix}   p_k\\ j-2 \end{pmatrix}
 s_k^{p_k-j+2}F_k(t_k^{q_k})\right) \otimes\cdots\otimes s_m^{p_m}t_m^{q_m}\otimes v_{(\textbf{p},\textbf{q})}.
\end{align*}
Using Lemma \ref{crucial} again, we observe that the elements
\begin{equation}\label{l}
\begin{split}
\sum_{(\textbf{p},\textbf{q})\in E}s_1^{p_1}t_1^{q_1}\otimes \cdots
\otimes &\left(\binom{p_k }{ j} s_k^{p_k-j+1}t_k^{q_k}- \binom{p_k }{ j-1}s_k^{p_k-j+1}G_k(t_k^{q_k}) \right. \\
& \qquad\quad\qquad\qquad \left.-\alpha_k \binom{p_k }{ j-2} s_k^{p_k-j+2}F_k(t_k^{q_k})\right) \otimes\cdots\otimes s_m^{p_m}t_m^{q_m}\otimes v_{(\textbf{p},\textbf{q})}
\end{split}
\end{equation}
belong to $N$ for $1\le k\le m$ and $0\le j\le P_k+2$.
Taking $j=0$ in \eqref{l}, we obtain \eqref{nnn}.
\end{proof}

\begin{lemma}\label{generator}
For any non-zero $v$ in $V$, the element $1\otimes\cdots\otimes1\otimes v$ is a generator of $\mathbf{T}$.
\end{lemma}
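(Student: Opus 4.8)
The plan is to show that the cyclic submodule $N=U(\mathcal{W})(1\otimes\cdots\otimes1\otimes v)$ already exhausts $\mathbf{T}$. Since $\mathbf{T}$ is spanned by the elements $\mathbf{s}^{\mathbf{p}}\mathbf{t}^{\mathbf{q}}\otimes w$ with $\mathbf{p},\mathbf{q}\in\N^m$ and $w\in V$, it suffices to prove that every such element lies in $N$. I would organize the argument into two stages: first I would decorate the fixed vector $v$ with an arbitrary $\Omega$-monomial, and then I would vary the $V$-component by exploiting the simplicity of $V$.

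For the first stage I would apply the preceding lemma to the submodule $N$ itself. Starting from the single-term element $1\otimes\cdots\otimes1\otimes v\in N$, repeated use of \eqref{nn} multiplies the (only) term by $t_k$ and repeated use of \eqref{nnn} multiplies it by $s_k$. Climbing independently in each of the $2m$ polynomial variables then produces $\mathbf{s}^{\mathbf{p}}\mathbf{t}^{\mathbf{q}}\otimes v\in N$ for all $\mathbf{p},\mathbf{q}\in\N^m$. This is where the genuine work lies, but it has already been carried out in the previous lemma through the nonvanishing of the Vandermonde-type determinant of Lemma \ref{crucial}, under the standing hypothesis that $\lambda_1,\ldots,\lambda_m$ are pairwise distinct.

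For the second stage I would introduce the subspace
\[
S=\left\{\,w\in V : \mathbf{s}^{\mathbf{p}}\mathbf{t}^{\mathbf{q}}\otimes w\in N \text{ for all }\mathbf{p},\mathbf{q}\in\N^m\,\right\}
\]
of $V$, which by the first stage contains $v$ and is therefore nonzero. The crux is that $S$ is a $\mathcal{W}$-submodule. Indeed, for any $X\in\{L_n,W_n,C\mid n\in\Z\}$ and any $w\in S$, the action on the tensor product gives $X(\mathbf{s}^{\mathbf{p}}\mathbf{t}^{\mathbf{q}}\otimes w)=y+\mathbf{s}^{\mathbf{p}}\mathbf{t}^{\mathbf{q}}\otimes Xw$, where $y$ is a finite sum of terms of the form $\mathbf{s}^{\mathbf{p}'}\mathbf{t}^{\mathbf{q}'}\otimes w$ coming from letting $X$ act on the $\Omega$-slots. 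Each such term lies in $N$ because $w\in S$, so $y\in N$; since the left-hand side is also in $N$, we conclude $\mathbf{s}^{\mathbf{p}}\mathbf{t}^{\mathbf{q}}\otimes Xw\in N$ for all $\mathbf{p},\mathbf{q}$, that is $Xw\in S$. Hence $S$ is a nonzero submodule of the simple module $V$, forcing $S=V$. Consequently every $\mathbf{s}^{\mathbf{p}}\mathbf{t}^{\mathbf{q}}\otimes w$ lies in $N$, and therefore $N=\mathbf{T}$.

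The main obstacle is the clean separation of the $V$-action from the $\Omega$-action carried out in the second stage: a priori the coproduct mixes all $m+1$ tensor slots, and the argument succeeds only because the $\Omega$-slot contributions never alter the $V$-component $w$, so they can be absorbed into $N$ using $w\in S$. The first stage is conceptually the heart of the matter, but it is already furnished by the preceding lemma; here one merely observes that, starting from a single monomial, the extraction formulas \eqref{nn} and \eqref{nnn} are enough to reach every monomial.
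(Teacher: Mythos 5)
Your proof is correct and takes essentially the same route as the paper's: your first stage is exactly the paper's use of \eqref{nn} and \eqref{nnn} to obtain $\left(\bigotimes_{k=1}^{m}\O(\lambda_k,\a_k,h_k)\right)\otimes v\subset N$, and your subspace $S$ coincides with the paper's subspace $U=\left\{ u\in V\mid (\bigotimes_{k=1}^{m}\O(\lambda_k,\a_k,h_k))\otimes u\subset N\right\}$, which is shown to be a nonzero submodule of the simple module $V$ by the same rearrangement $f_1\otimes\cdots\otimes f_m\otimes(xu)=x(f_1\otimes\cdots\otimes f_m\otimes u)-x(f_1\otimes\cdots\otimes f_m)\otimes u\in N$. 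No gaps; the argument matches the paper's proof step for step.
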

\begin{proof}
Let $N$ be the submodule of $\mathbf{T}$ generated by $1\otimes\cdots\otimes1\otimes v$. 
Then, from \eqref{nn} and \eqref{nnn}, we know that $\left(\bigotimes_{k=1}^{m}\O(\lambda_k,\a_k,h_k)\right)\otimes v\subset N$. 
Define $U=\left\{ u\in V\mid  (\bigotimes_{k=1}^{m}\O(\lambda_k,\a_k,h_k))\otimes u\subset N\right\}$. 
It follows that the vector space $U$ is non-zero. 
For $u\in U$, $x\in \mathcal{W}$ and $f_k\in \O(\lambda_k,\a_k,h_k)$, we have
\[
f_1\otimes\cdots\otimes f_m\otimes(xu)=x(f_1\otimes\cdots\otimes f_m\otimes u)-x(f_1\otimes\cdots\otimes f_m)\otimes u\in xN+N=N.
\]
Consequently, $U$ is a non-zero submodule of $V$, leading to the conclusion that $N=\mathbf{T}$.
\end{proof}

The \textbf{main result} of this subsection states as follows.

\begin{thm}\label{sim}
The tensor product module $\mathbf{T}$ is simple if and only if $\lambda_1,\ldots,\lambda_m$ are pairwise distinct.
\end{thm}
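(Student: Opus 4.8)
The plan is to prove the two implications separately, treating simplicity (the ``if'' direction) as the main construction and non-simplicity (the ``only if'' direction) by a reduction to two tensor factors. Throughout, the role of the distinctness hypothesis is to make Lemma~\ref{crucial} non-degenerate, so that the various $\lambda_k^n$-weighted, $n$-graded contributions of the different slots can be separated.

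For sufficiency, assume $\lambda_1,\ldots,\lambda_m$ are pairwise distinct and let $N$ be a nonzero submodule. I would fix a nonzero $g\in N$ whose degree $(\mathbf{p},\mathbf{q})$ is minimal in the lexicographic order on $\mathbb{Z}^{2m}$ and show that this forces $g=1\otimes\cdots\otimes1\otimes v$ for some $0\neq v\in V$; Lemma~\ref{generator} then yields $N=\mathbf{T}$, proving simplicity. The relations \eqref{n}, \eqref{nn}, \eqref{nnn} of the preceding lemma are available since the $\lambda_k$ are distinct. First I would use \eqref{n}: processing $k=1,2,\ldots,m$ in turn, if some $P_k>0$ then \eqref{n} produces a nonzero element of $N$ all of whose monomials have $s_k$-exponent $0$, which has strictly smaller lexicographic degree, contradicting minimality. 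Hence the minimal $g$ is $s$-free, i.e. $g=\sum_{\mathbf{q}}\mathbf{t}^{\mathbf{q}}\otimes v_{\mathbf{q}}$.

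The remaining task is to lower the $t$-exponents, and here the three stated relations do not suffice, since they only raise the $t$-degree. For this I would rerun the $L_n$-computation from the proof of the previous lemma on the element $g$ itself: separating by powers of $n$ via Lemma~\ref{crucial} and reading off the coefficient of $n^2\lambda_k^n$ (cf.\ \eqref{l} with $j=2$, noting $\alpha_k\neq 0$) shows that $\sum_{\mathbf{q}}F_k(t_k^{q_k})\prod_{l\neq k}t_l^{q_l}\otimes v_{\mathbf{q}}\in N$. Since $F_k(t_k^{q_k})=g_k(t_k)t_k^{q_k}-q_kt_k^{q_k-1}$ and $N$ is closed under multiplication of the $k$-th slot by any polynomial in $t_k$ (iterate \eqref{nn}), subtracting the $g_k(t_k)$-multiple shows that the $t_k$-derivative $\sum_{\mathbf{q}}q_kt_k^{q_k-1}\prod_{l\neq k}t_l^{q_l}\otimes v_{\mathbf{q}}$ lies in $N$. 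This operation strictly lowers the $t_k$-degree and is nonzero whenever some monomial has $q_k>0$, so minimality forces every $q_k=0$. Therefore $g=1\otimes\cdots\otimes1\otimes v$, as required.

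For necessity, suppose $\lambda_i=\lambda_j$ for some $i\neq j$. Regrouping the (cocommutative) tensor factors gives $\mathbf{T}\cong A\otimes B$ with $A=\O(\lambda_i,\alpha_i,h_i)\otimes\O(\lambda_j,\alpha_j,h_j)$ and $B$ the tensor of the remaining $\O$'s with $V$; since $A'\otimes B$ is a proper nonzero submodule of $A\otimes B$ for any proper nonzero submodule $A'\subsetneq A$, it suffices to show the two-factor module $A$ is not simple. I would prove that the submodule generated by $1\otimes 1$ is proper. The point is precisely the degeneracy of Lemma~\ref{crucial} when two weights agree: applying $W_n$ and $L_n$ and separating by powers of $n$ now only isolates the combined contribution of slots $i$ and $j$ (which share the common factor $\lambda^n$), never the individual ones, so slot-asymmetric vectors such as $t_i\otimes 1$ cannot be produced. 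Making this precise --- by exhibiting an explicit linear invariant, or a noninjective nonzero $\mathcal{W}$-homomorphism out of $A$, that annihilates the submodule generated by $1\otimes 1$ but not all of $A$ --- is the step I expect to be the main obstacle of the whole proof.
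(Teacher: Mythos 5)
Your ``if'' direction is correct and is essentially the paper's own argument: take a nonzero element of minimal lexicographic degree in a submodule $N$, use \eqref{n} to force all $s$-exponents to vanish, then use \eqref{l} with $j=2$ (valid since $\alpha_k\neq0$) together with iterated applications of \eqref{nn} to conclude that the $t_k$-derivative of $g$ lies in $N$, contradicting minimality unless $g=1\otimes\cdots\otimes1\otimes v$; Lemma \ref{generator} then finishes. Your reduction of the ``only if'' direction to the two-factor module $A=\O(\lambda_i,\alpha_i,h_i)\otimes\O(\lambda_j,\alpha_j,h_j)$ also matches the paper.

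The genuine gap is exactly the step you flag as ``the main obstacle'': you never produce the proper submodule (equivalently, the invariant), and without it the ``only if'' direction is unproved. The paper exhibits it explicitly: when $\lambda_i=\lambda_j=\lambda$, the subspace $M=\mathrm{span}\left\{\C[t_i,t_j](s_i+s_j)^r\mid r\in\N\right\}$ is a nonzero proper submodule. The verification is short because both slots undergo the same shift $s\mapsto s-n$ and carry the common factor $\lambda^n$; for instance
\[
W_n\big(f(t_i,t_j)(s_i+s_j)^r\big)=\lambda^n f(t_i,t_j)\big((t_i+t_j)-n(\alpha_i+\alpha_j)\big)(s_i+s_j-n)^r,
\]
and the $L_n$-action similarly produces $(s_i+s_j-n)^r$ times elements of $\C[t_i,t_j]+(s_i+s_j)\C[t_i,t_j]$, all of which lie in $M$ after expanding $(s_i+s_j-n)^r$. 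So the correct invariant lives in the $s$-variables (dependence on $s_i,s_j$ only through $s_i+s_j$), not in the $t$-variables.

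Moreover, the heuristic you offer in its place --- that ``slot-asymmetric vectors such as $t_i\otimes1$ cannot be produced'' from $1\otimes1$ --- is false in general, so an argument along that line cannot be completed. Take $h_i$ constant and $h_j(t_j)=t_j$, so that $g_i=0$, $F_i(1)=0$, $G_i(1)=h_i(\alpha_i)$, while $g_j=1$, $G_j(1)=t_j+\alpha_j$. Then
\[
L_n(1\otimes1)=\lambda^n\big((s_i+s_j)+n\big(h_i(\alpha_i)+t_j+\alpha_j\big)-n^2\alpha_j\big),
\]
and separating powers of $n$ (Lemma \ref{crucial} with a single $\lambda$) puts $h_i(\alpha_i)+t_j+\alpha_j$, hence the slot-asymmetric vector $1\otimes t_j$, inside the cyclic submodule generated by $1\otimes1$. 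That submodule is indeed proper --- but only because it is contained in $M$, which is the fact your proposal is missing.
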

\begin{proof}
Suppose $\lambda_1,\ldots,\lambda_m$ are pairwise distinct and $N$ is a non-zero submodule of $\mathbf{T}$. 
Take
\[
0\ne g=\sum_{(\textbf{p},\textbf{q})\in E} \textbf{s}^{\textbf{p}}\textbf{t}^{\textbf{q}}\otimes v_{(\textbf{p},\textbf{q})}\in N
\]
with the lowest $\deg(g)$ and assume that $\deg(g)=(p'_1,\ldots,p'_m,q'_1,\ldots,q'_m)$.
\begin{claim}
We have $\deg(g)=(0,\ldots,0,0,\ldots,0)$.
\end{claim}

Indeed, if $p'_k>0$ for some $1\le k\le m$, let $k_0=\min\left\{k\mid p'_k>0 \right\}$. 
Then, by \eqref{n}, we know that the element
\[
\sum_{(\textbf{p},\textbf{q})\in E_{k_0}} \textbf{s}^{\textbf{p}-P_{k_0}e_{k_0}}\textbf{t}^{\textbf{q}}\otimes v_{(\textbf{p},\textbf{q})}
\]
belongs to $N$ and has a lower degree than $g$. 
Hence, we have $p'_k=0$ for all $1\le k\le m$ and $g=\sum_{(\textbf{0},\textbf{q})\in E}\textbf{t}^{\textbf{q}}\otimes v_{(\textbf{0},\textbf{q})}$. 
Now, by \eqref{l}, we know that for $j\in\left\{0,1,2\right\}$,
\begin{equation}\label{ll}
\sum_{(\textbf{0},\textbf{q})\in E}t_1^{q_1}\otimes \cdots
\otimes\left (\delta_{j0}t_k^{q_k}-\delta_{j1}G_k(t_k^{q_k})-\alpha_k\delta_{j2}F_k(t_k^{q_k}) \right)
\otimes\cdots\otimes t_m^{q_m}\otimes v_{(\textbf{0},\textbf{q})}\in N.
\end{equation}
Taking $j=2$ in \eqref{ll} and noting that $\a_k\ne0$, we have
\[
\sum_{(\textbf{0},\textbf{q})\in E}t_1^{q_1}\otimes \cdots
\otimes\left( g_k(t_k)t_k^{q_k}-q_kt_k^{q_k-1}\right)
\otimes\cdots\otimes t_m^{q_m}\otimes v_{(\textbf{0},\textbf{q})}\in N.
\]
Combined with \eqref{nn}, we deduce that
\[
\sum_{(\textbf{0},\textbf{q})\in E}t_1^{q_1}\otimes \cdots
\otimes q_kt_k^{q_k-1}
\otimes\cdots\otimes t_m^{q_m}\otimes v_{(\textbf{0},\textbf{q})}\in N.
\]
If $q'_k>0$ for some $1\le k\le m$, let $k_1=\min\left\{k\mid q'_k>0\right\}$. 
Then the element
\[
\sum_{(\textbf{0},\textbf{q})\in E}t_1^{q_1}\otimes \cdots
\otimes q_{k_1}t_{k_1}^{q_{k_1}-1}
\otimes\cdots\otimes t_m^{q_m}\otimes v_{(\textbf{0},\textbf{q})}
\]
belongs to $N$ and has a lower degree than $g$, contradicting the minimality.
We conclude that $g=1\otimes\cdots\otimes1\otimes v$ for some $0\ne v\in V$. 
Lemma \ref{generator} then yields $N=\mathbf{T}$.

Conversely, suppose $\lambda_i=\lambda_j$ for some $1\le i <  j\le m$. 
 It suffices to consider the tensor product module $\O(\lambda_i,\alpha_i,h_i)\bigotimes\O(\lambda_j,\alpha_j,h_j)$.
As a vector space, we have $\O(\lambda_i,\alpha_i,h_i)\bigotimes\O(\lambda_j,\alpha_j,h_j)\cong\C[s_i,s_j,t_i,t_j]$. 
Define $M=\mathrm{span}\left\{\C[t_i,t_j](s_i+s_j)^r\mid r\in\N\right\}$. 
Direct computation shows $M$ is a non-zero proper submodule of $\O(\lambda_i,\alpha_i,h_i)\bigotimes\O(\lambda_j,\alpha_j,h_j)$, though we omit the details.
\end{proof}

\subsection{Isomorphism classes}
In the previous subsection, we have established the simplicity of $\mathbf{T}$. 
The goal of this subsection is to determine their isomorphism classes.
According to Theorem \ref{sim}, throughout this subsection, we always assume $\lambda_1,\ldots,\lambda_m\in\C^*$ are pairwise distinct.

For $g\in\mathbf{T}$, define
\[
R_g=\lim_{s \to \infty} \dim\mathrm{span}\left\{g, W_ng\mid n\ge s \right\}
\]
and
\[
R_{\mathbf{T}}=\inf\left\{ R_g\mid 0\ne g\in \mathbf{T}\right\}.
\]
Let $\mathbf{N}=\bigotimes_{k=1}^{m}\C[t_k]\otimes V\subset\mathbf{T}$. Then we have the following statements.
\begin{prop}
For $0\ne g\in \mathbf{T}$, we always have $R_g\ge m+1$. 
Furthermore, the equality holds if and only if $0\ne g\in\mathbf{N}$.
\end{prop}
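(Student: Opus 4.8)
The plan is to reduce the computation of $R_g$ to a finite-dimensional linear independence problem inside $\mathbf{T}$, and then use $s_k$-degrees to separate the two cases. Fix a nonzero $g=\sum_{(\mathbf{p},\mathbf{q})\in E}\mathbf{s}^{\mathbf{p}}\mathbf{t}^{\mathbf{q}}\otimes v_{(\mathbf{p},\mathbf{q})}$ and choose $n$ large enough that every $v_{(\mathbf{p},\mathbf{q})}$ is annihilated by $\mathcal{W}_n$. Then the expansion of $W_n g$ already computed in the proof of the first lemma of this subsection has the shape
\[
W_ng=\sum_{k=1}^{m}\sum_{j=0}^{P_k+1}(-1)^j n^j\lambda_k^n\, A_{k,j},
\]
where the coefficients $A_{k,j}\in\mathbf{T}$ are independent of $n$. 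Since the $\lambda_k$ are pairwise distinct, Lemma~\ref{crucial} shows that the functions $n\mapsto n^j\lambda_k^n$ are linearly independent, so for all sufficiently large $s$ the span $\mathrm{span}\{W_ng\mid n\ge s\}$ recovers exactly $\mathrm{span}\{A_{k,j}\}$. Hence $R_g=\dim\mathrm{span}\big(\{g\}\cup\{A_{k,j}\}\big)$, and everything reduces to counting the dimension of this space.

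Reading off the modes, the coefficient $A_{k,0}$ (the case $j=0$) is precisely the element in \eqref{nn}, i.e.\ $g$ with its $k$-th tensor factor multiplied by $t_k$; write this as $t_k\!\cdot\! g$. First I would show $g,\,t_1\!\cdot\! g,\ldots,t_m\!\cdot\! g$ are linearly independent, which already gives $R_g\ge m+1$ unconditionally. Writing $g=\sum_i F_i(\mathbf{s},\mathbf{t})\otimes w_i$ for a basis $\{w_i\}$ of $V$ with $F_i\in\mathbb{C}[\mathbf{s},\mathbf{t}]:=\mathbb{C}[s_1,\ldots,s_m,t_1,\ldots,t_m]$, a relation $\big(a_0+\sum_k a_k t_k\big)g=0$ forces $\big(a_0+\sum_k a_kt_k\big)F_i=0$ for each $i$; as the polynomial ring is a domain and some $F_i\neq0$, all coefficients vanish. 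The same argument records a fact I will reuse: on the free $\mathbb{C}[\mathbf{s},\mathbf{t}]$-module $\mathbf{T}$, multiplication by any nonzero $a_0+\sum_k a_kt_k$ preserves the $s_k$-degree for every $k$.

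It then remains to decide when equality holds. If $0\neq g\in\mathbf{N}$, every $p_k=0$, so $P_k=0$ and the only modes are $A_{k,0}=t_k\!\cdot\! g$ together with $A_{k,1}$; a direct reading of the formula gives $A_{k,1}=\alpha_k g$, a multiple of $g$. Thus $\mathrm{span}\big(\{g\}\cup\{A_{k,j}\}\big)=\mathrm{span}\{g,t_1\!\cdot\! g,\ldots,t_m\!\cdot\! g\}$ is $(m+1)$-dimensional and $R_g=m+1$. Conversely, if $g\notin\mathbf{N}$, choose $k_0$ with $P_{k_0}>0$. The top mode $A_{k_0,P_{k_0}+1}$ equals, up to the nonzero scalar $\alpha_{k_0}$, the element in \eqref{n}; it is nonzero and every monomial occurring in it has $s_{k_0}$-exponent $0$, so its $s_{k_0}$-degree is $0<P_{k_0}$. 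By the degree remark above, every nonzero element of $\mathrm{span}\{g,t_1\!\cdot\! g,\ldots,t_m\!\cdot\! g\}$ has $s_{k_0}$-degree exactly $P_{k_0}$, so $A_{k_0,P_{k_0}+1}$ lies outside this span and supplies an $(m+2)$-nd independent vector, giving $R_g\ge m+2$.

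The \emph{main obstacle} is the reduction in the first paragraph: one must argue carefully that for large $s$ the span of the $W_ng$ recovers all the coefficient vectors $A_{k,j}$ and nothing else, which is exactly where the Vandermonde-type nonvanishing of Lemma~\ref{crucial} is indispensable. The remaining work is bookkeeping with $s_{k_0}$-degrees; the only delicate point there is confirming that multiplication by a nonzero polynomial in the $t_k$ is injective on $\mathbf{T}$, viewed as a free module over $\mathbb{C}[\mathbf{s},\mathbf{t}]$, so that it preserves the leading $s_{k_0}$-degree.
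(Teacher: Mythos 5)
Your proposal is correct and follows essentially the same route as the paper's own proof: both reduce $R_g$ to the span of the $n$-coefficients of $W_ng$ via Lemma \ref{crucial}, use the modes $A_{k,0}=t_k\cdot g$ (the elements $w_{k0}$ of the paper) to obtain the bound $R_g\ge m+1$, note that for $g\in\mathbf{N}$ the only remaining modes are $A_{k,1}=\alpha_k g$, and use the top mode $A_{k_0,P_{k_0}+1}$, i.e.\ the element of \eqref{n}, to force $R_g\ge m+2$ when $g\notin\mathbf{N}$. The only difference is bookkeeping: the paper certifies linear independence through the lexicographic degree ordering ($\deg(w_{10})\succ\cdots\succ\deg(w_{m0})\succ\deg(g)$, with $w_{k_0,P_{k_0}+1}$ having a degree distinct from all of these), whereas you use injectivity of multiplication by a nonzero polynomial in the $t_k$ on the free module $\C[\mathbf{s},\mathbf{t}]\otimes V$ together with the $s_{k_0}$-degree, which is an equally valid verification of the same facts.
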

\begin{proof}
Assume that
\[
0\ne g=\sum_{(\textbf{p},\textbf{q})\in E} \textbf{s}^{\textbf{p}}\textbf{t}^{\textbf{q}}\otimes v_{(\textbf{p},\textbf{q})}
\]
with $\deg(g)=(p'_1,\ldots,p'_m,q'_1,\ldots,q'_m)$. Choose a sufficiently large integer $i$ such that $W_nv_{(\textbf{p},\textbf{q})}=0$ for $n\ge i$ and $(\textbf{p},\textbf{q})\in E$. For $1\le k\le m$ and $0\le j\le P_k+1$, let
\[
w_{kj}=\sum_{(\textbf{p},\textbf{q})\in E}s_1^{p_1}t_1^{q_1}\otimes \cdots\otimes \left ( \begin{pmatrix}
p_k \\
j
\end{pmatrix}s_{k}^{p_k-j}t_k^{q_k+1}+\alpha_k\begin{pmatrix}
p_k \\
j-1
\end{pmatrix}s_k^{p_k-j+1}t_k^{q_k} \right )\otimes \cdots \otimes  s_m^{p_m}t_m^{q_m}\otimes v_{(\textbf{p},\textbf{q})}.
\]
Then, by \eqref{t}, we know that for $s\ge i$,
\[
\mathrm{span}\left\{g, W_ng\mid n\ge s \right\}=\mathrm{span}\left\{g, w_{kj}\mid 1\le k\le m, 0\le j\le P_k+1\right\}.
\]
Note that $\deg(w_{10})\succ\deg(w_{20})\succ\cdots\succ\deg(w_{m0})\succ\deg(g)$.
If $p'_k>0$ for some $1\le k\le m$, let $k_0=\min\left\{k\mid p'_k>0 \right\}$, then the dimension of the subspace spanned by the elements
\begin{center}
$\left\{w_{k_0,P_{k_0}+1},g,w_{k0}\mid 1\le k\le m\right\}$
\end{center}
is $m+2$, which implies $R_g>m+1$.
If $0\ne g\in\mathbf{N}$, then we have $P_k=0$ for $1\le k\le m$. Hence  $R_g=\dim\mathrm{span}\left\{g, w_{kj}\mid 1\le k\le m, 0\le j\le1\right\}=m+1$.
\end{proof}
Take another simple tensor product module
\[
\mathbf{T}'=\bigotimes_{k=1}^{m'}\O(\lambda'_k,\a'_k,f_k)\otimes V',
\]
where $\lambda'_1,\ldots,\lambda'_{m'}\in\C^*$ are pairwise distinct and $V'$ is a simple restricted module. For $1\le k\le m'$, as a vector space, we have $\O(\lambda'_k,\a'_k,f_k)=\C[s_k,t_k]$. Set $u_k(t_k)=\dfrac{f_k(t_k)-f_k(\a'_k)}{t_k-\a'_k}$. Define the linear maps $F'_k:\C[t_k]\to\C[t_k]$ and $G'_k:\C[t_k]\to\C[t_k]$ as in \eqref{F} and \eqref{G}, respectively. Let $\mathbf{N}'=\bigotimes_{k=1}^{m'}\C[t_k]\otimes V'$.
Now, we are in a position to determine the isomorphism classes of these simple tensor product $\mathcal{W}$-modules.
\begin{thm}\label{iso}
The modules $\mathbf{T}$ and $\mathbf{T}'$ are isomorphic if and only if $m=m'$, $V\cong V'$ and $(\lambda_k,\a_k,h_k)=(\lambda'_k,\a'_k,f_k)$ up to a permutation.
\end{thm}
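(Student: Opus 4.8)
The plan is to prove both directions, with the nontrivial work concentrated in the ``only if'' direction. The ``if'' direction is essentially bookkeeping: if $m=m'$, $V\cong V'$, and the triples $(\lambda_k,\a_k,h_k)$ agree with $(\lambda'_k,\a'_k,f_k)$ after a permutation $\sigma$ of $\{1,\ldots,m\}$, then tensoring the individual module isomorphisms $\O(\lambda_k,\a_k,h_k)\cong\O(\lambda'_{\sigma(k)},\a'_{\sigma(k)},f_{\sigma(k)})$ (which exist by the Proposition, since for $\a_k\ne0$ the identity is the only homomorphism and it is an isomorphism) together with the isomorphism $V\cong V'$ yields an isomorphism $\mathbf{T}\cong\mathbf{T}'$. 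One must check that permuting the tensor factors is a $\mathcal{W}$-module map, which follows because the coproduct is cocommutative on the relevant symmetric tensor positions.

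For the ``only if'' direction, suppose $\Psi:\mathbf{T}\to\mathbf{T}'$ is an isomorphism. The first key step is to exploit the invariant $R_{\mathbf{T}}$ developed just above the theorem. Since $R_g$ is defined purely in terms of the $\mathcal{W}$-action (via the $W_n$ for large $n$), it is preserved by $\Psi$, so $R_{\mathbf{T}}=R_{\mathbf{T}'}$, giving $m+1=m'+1$ and hence $m=m'$. Moreover, the Proposition characterizes $\mathbf{N}$ as exactly the set of nonzero $g$ with $R_g=m+1$ (together with $0$), so $\Psi$ must carry $\mathbf{N}=\bigotimes_k\C[t_k]\otimes V$ onto $\mathbf{N}'=\bigotimes_k\C[t_k]\otimes V'$. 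This reduces the problem from the full polynomial modules to the ``$s$-free'' subspaces $\mathbf{N}$ and $\mathbf{N}'$, on which the $W_n$-action is comparatively transparent.

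The second key step is to recover the spectral data $\{\lambda_1,\ldots,\lambda_m\}$. On $\mathbf{N}$, the action of $W_n$ for large $n$ acts on a pure tensor $t_1^{q_1}\otimes\cdots\otimes t_m^{q_m}\otimes v$ by $\sum_k\lambda_k^n(t_k-n\a_k)$ in the $k$-th slot, so the operators $W_n$ are, up to lower-order-in-$n$ corrections, governed by the scalars $\lambda_k^n$. I would extract the set $\{\lambda_k\}$ as the eigenvalue-type data visible in the asymptotics of $W_n|_{\mathbf{N}}$ as $n\to\infty$; because the $\lambda_k$ are pairwise distinct, Lemma \ref{crucial} (the Vandermonde-type determinant) guarantees that the functions $n\mapsto n^j\lambda_k^n$ are linearly independent, so each $\lambda_k$ and its multiplicity-one occurrence can be separated. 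Matching this data across $\Psi$ forces $\{\lambda_1,\ldots,\lambda_m\}=\{\lambda'_1,\ldots,\lambda'_m\}$, and after renumbering we may assume $\lambda_k=\lambda'_k$ for each $k$. I expect \textbf{this separation-of-variables-by-$\lambda$-weight argument to be the main obstacle}, since one must decouple the tensor factors that share no common $\lambda$ and simultaneously track the subleading coefficients in $n$ that encode $\a_k$ and $G_k,F_k$ (hence $h_k$).

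The final step is to pin down $\a_k$, $h_k$, and the module $V$. Having fixed the $\lambda_k$, I would restrict attention to the $\lambda_k$-homogeneous component of the $W_n$- and $L_n$-asymptotics: the coefficient of $n\lambda_k^n$ in $W_n$ isolates $-\a_k$ acting in the $k$-th factor, recovering each $\a_k=\a'_k$; comparing the $L_n$-action (whose large-$n$ behavior involves $G_k$ and $\a_kF_k$, equivalently $g_k$ and $h_k(\a_k)$) then determines $h_k$ up to the ambiguity already classified in the Proposition, and since $\a_k\ne0$ that Proposition leaves no freedom, so $h_k=f_k$. Finally, with all polynomial data matched, $\Psi$ descends to an isomorphism of the ``constant'' layer $1\otimes\cdots\otimes1\otimes V\cong 1\otimes\cdots\otimes1\otimes V'$ compatible with the residual $\mathcal{W}$-action, yielding $V\cong V'$ as $\mathcal{W}$-modules and completing the proof.
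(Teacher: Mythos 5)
Your proposal is correct and follows essentially the same route as the paper: the invariant $R_g$ yields $m=m'$ and sends $\mathbf{N}$ into $\mathbf{N}'$, the large-$n$ asymptotics of $W_n$ combined with Lemma \ref{crucial} recover the $\lambda_k$ and $\a_k$, the $L_n$-asymptotics recover the $h_k$, and the restriction to the constant layer gives $V\cong V'$. The only step you gloss over, and which the paper handles explicitly, is showing that the image of $1\otimes\cdots\otimes1\otimes v$ is genuinely of the form $1\otimes\cdots\otimes1\otimes\tau(v)$ rather than a general element of $\mathbf{N}'$; this falls out of the same $L_n$-comparison once $u_k=g_k$ is established, so it is a detail within your strategy rather than a gap.
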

\begin{proof}
Since $\mathbf{T}\cong\mathbf{T}'$, we must have $R_{\mathbf{T}}=R_{\mathbf{T}'}$ and hence $m=m'$. 
Suppose $\phi:\mathbf{T}\to\mathbf{T}'$ is an isomorphism map. 
Due to $R_g=R_{\phi(g)}$ and $m=m'$, we deduce that $\phi(1\otimes\cdots\otimes1\otimes v)\in\mathbf{N}'$ for $v\in V$. 
For a fixed $0\ne v\in V$, assume that
\[
\phi(1\otimes\cdots\otimes1\otimes v)=\sum_{(\textbf{0},\textbf{r})\in S}t_1^{r_1}\otimes\cdots\otimes t_m^{r_m}\otimes v_{(\textbf{0},\textbf{r})}.
\]
Choose a sufficiently large integer $i$ such that $\mathcal{W}_nv=0$ and $\mathcal{W}_nv_{(\textbf{0},\textbf{r})}=0$ for $n\ge i$ and $(\textbf{0},\textbf{r})\in S$.
For $n\ge i$, by the action of $W_n$, we have
\begin{equation}\label{la}
\begin{split}
 &\sum_{k=1}^{m}\lambda_k^n\phi(1\otimes\cdots\otimes t_k\otimes\cdots\otimes1\otimes v)-\sum_{k=1}^{m}n\lambda_k^n\alpha_k\phi(1\otimes\cdots\otimes1\otimes v)\\
=&\sum_{k=1}^{m}(\lambda'_k)^n\sum_{(\textbf{0},\textbf{r})\in S}t_1^{r_1}\otimes\cdots\otimes t_k^{r_k+1}\otimes\cdots\otimes t_m^{r_m}\otimes v_{(\textbf{0},\textbf{r})}-\sum_{k=1}^{m}n(\lambda'_k)^n\alpha'_k\phi(1\otimes\cdots\otimes1\otimes v).
\end{split}
\end{equation}
We can renumber $\left\{\lambda_1,\ldots,\lambda_m\right\}$ if necessary. 
Note that we have $\phi(t_1\otimes1\otimes\cdots\otimes1\otimes v)\ne0$ and $\lambda_1\notin\left\{\lambda_2,\ldots,\lambda_m\right\}$.
It follows from Lemma \ref{crucial} that $\lambda_1=\lambda'_{t_1}$ for some $1\le t_1\le m$. 
Without loss of generality, we may assume $t_1=1$. 
Then equation \eqref{la} becomes
\begin{align*}
 &\lambda_1^n\phi(t_1\otimes1\otimes\cdots\otimes1\otimes v)-\lambda_1^n\sum_{(\textbf{0},\textbf{r})\in S}t_1^{r_1+1}\otimes t_2^{r_2}\otimes\cdots\otimes t_m^{r_m}\otimes v_{(\textbf{0},\textbf{r})}\\
 &-n\lambda_1^n(\alpha_1-\alpha'_1)\phi(1\otimes\cdots\otimes1\otimes v)+\sum_{k=2}^{m}\lambda_k^n\phi(1\otimes\cdots\otimes t_k\otimes\cdots\otimes1\otimes v)\\
 &-\sum_{k=2}^{m}n\lambda_k^n\alpha_k\phi(1\otimes\cdots\otimes1\otimes v)\\
=&\sum_{k=2}^{m}(\lambda'_k)^n\sum_{(\textbf{0},\textbf{r})\in S}t_1^{r_1}\otimes\cdots\otimes t_k^{r_k+1}\otimes\cdots\otimes t_m^{r_m}\otimes v_{(\textbf{0},\textbf{r})}-\sum_{k=2}^{m}n(\lambda'_k)^n\alpha'_k\phi(1\otimes\cdots\otimes1\otimes v).
\end{align*}
Note that $\lambda_1\notin\left\{\lambda_2,\ldots,\lambda_m,\lambda'_2,\ldots,\lambda'_m\right\}$ and $\phi(1\otimes t_2\otimes\cdots\otimes1\otimes v)\ne0$, 
we may assume $\lambda_2=\lambda'_2$ in a similar way. 
Repeating this process, we have $\lambda_k=\lambda'_k$ for $1\le k\le m$, and
\begin{align*}
0=&\sum_{k=1}^{m}\lambda_k^n\big(\phi(1\otimes\cdots\otimes t_k\otimes\cdots\otimes1\otimes v)-\sum_{(\textbf{0},\textbf{r})\in S}t_1^{r_1}\otimes\cdots\otimes t_k^{r_k+1}\otimes\cdots\otimes t_m^{r_m}\otimes v_{(\textbf{0},\textbf{r})}\big)\\
&-\sum_{k=1}^{m}n\lambda_k^n(\alpha_k-\alpha'_k)\phi(1\otimes\cdots\otimes1\otimes v).
\end{align*}
By Lemma \ref{crucial} and noting that $\phi(1\otimes\cdots\otimes1\otimes v)\ne0$, we know that $\alpha_k=\alpha'_k$ for $1\le k\le m$.

Assume that
\[
\phi(t_1^{q_1}\otimes\cdots\otimes t_m^{q_m}\otimes v)=\sum_{(\textbf{0},\textbf{r})\in S}t_1^{r_1+q_1}\otimes\cdots\otimes t_m^{r_m+q_m}\otimes v_{(\textbf{0},\textbf{r})}
\]
for $q_1+\cdots+q_m=t\in\N$. 
Then for $n\ge i$, by the action of $W_n$, we have
\[
\sum_{k=1}^{m}\lambda_k^n\phi(t_1^{q_1}\otimes\cdots\otimes t_k^{q_k+1}\otimes\cdots\otimes t_m^{q_m}\otimes v)=\sum_{k=1}^{m}\lambda_k^n\sum_{(\textbf{0},\textbf{r})\in S}t_1^{r_1+q_1}\otimes\cdots\otimes t_k^{q_k+r_k+1}\otimes\cdots\otimes t_m^{r_m+q_m}\otimes v_{(\textbf{0},\textbf{r})}.
\]
Using the Vandermonde determinant, we see that 
\[
\phi(t_1^{q_1}\otimes\cdots\otimes t_k^{q_k+1}\otimes\cdots\otimes t_m^{q_m}\otimes v)=\sum_{(\textbf{0},\textbf{r})\in S}t_1^{r_1+q_1}\otimes\cdots\otimes t_k^{q_k+r_k+1}\otimes\cdots\otimes t_m^{r_m+q_m}\otimes v_{(\textbf{0},\textbf{r})}
\]
hold for $1\le k\le m$.
It follows that
\begin{equation}\label{syy}
\phi(t_1^{q_1}\otimes\cdots\otimes t_m^{q_m}\otimes v)=\sum_{(\textbf{0},\textbf{r})\in S}t_1^{r_1+q_1}\otimes\cdots\otimes t_m^{r_m+q_m}\otimes v_{(\textbf{0},\textbf{r})}
\end{equation}
for $q_1,\ldots,q_m\in\N$.

For $n\ge i$, by the action of $L_n$, we have
\begin{equation}\label{yys}
\begin{split}
 &\sum_{k=1}^{m}\phi\big(1\otimes\cdots\otimes\lambda_k^n(s_k+nG_k(1)-n^2\alpha_kF_k(1))\otimes\cdots\otimes1\otimes v\big)\\
=&\sum_{k=1}^{m}\sum_{(\textbf{0},\textbf{r})\in S}t_1^{r_1}\otimes\cdots\otimes\lambda_k^n\big(s_kt_k^{r_k}+nG'_k(t_k^{r_k})-n^2\alpha_kF'_k(t_k^{r_k})\big)\otimes\cdots\otimes t_m^{r_m}\otimes v_{(\textbf{0},\textbf{r})}.
\end{split}
\end{equation}
Comparing the coefficients of $n^2\lambda_k^n$ and using Lemma \ref{crucial}, we know that
\begin{align*}
\phi\big(1\otimes\cdots\otimes F_k(1)\otimes\cdots\otimes1\otimes v\big)
=\sum_{(\textbf{0},\textbf{r})\in S}t_1^{r_1}\otimes\cdots\otimes F'_k(t_k^{r_k})\otimes\cdots\otimes t_m^{r_m}\otimes v_{(\textbf{0},\textbf{r})}.
\end{align*}
Combining this with equation \eqref{syy}, we obtain
\[
\sum_{(\textbf{0},\textbf{r})\in S}t_1^{r_1}\otimes\cdots\otimes F_k(1)t_k^{r_k}\otimes\cdots\otimes t_m^{r_m}\otimes v_{(\textbf{0},\textbf{r})}=\sum_{(\textbf{0},\textbf{r})\in S}t_1^{r_1}\otimes\cdots\otimes F'_k(t_k^{r_k})\otimes\cdots\otimes t_m^{r_m}\otimes v_{(\textbf{0},\textbf{r})},
\]
that is,
\begin{equation*}
\sum_{(\textbf{0},\textbf{r})\in S}t_1^{r_1}\otimes\cdots\otimes\big(u_k(t_k)-g_k(t_k)\big)t_k^{r_k}\otimes\cdots\otimes t_m^{r_m}\otimes v_{(\textbf{0},\textbf{r})}=\sum_{(\textbf{0},\textbf{r})\in S}t_1^{r_1}\otimes\cdots\otimes r_kt_k^{r_k-1}\otimes\cdots\otimes t_m^{r_m}\otimes v_{(\textbf{0},\textbf{r})}.
\end{equation*}
It follows that $u_k(t_k)=g_k(t_k)$ for $1\le k\le m$. Hence we have $S=\left\{(\textbf{0},\textbf{0})\right\}$ and
\[
\phi(1\otimes\cdots\otimes1\otimes v)=1\otimes\cdots\otimes1\otimes \tau(v)
\]
for some $\tau(v)\in V'$. Now equation \eqref{yys} becomes
\begin{equation}\label{yy}
\begin{split}
 & \sum_{k=1}^{m}\phi\big(1\otimes\cdots\otimes\lambda_k^n(s_k+nG_k(1)-n^2\alpha_kF_k(1))\otimes\cdots\otimes1\otimes v\big)\\
=&\sum_{k=1}^{m}1\otimes\cdots\otimes\lambda_k^n\big(s_k+nG'_k(1)-n^2\alpha_kF'_k(1)\big)\otimes\cdots\otimes1\otimes\tau(v).
\end{split}
\end{equation}
Comparing the coefficients of $n\lambda_k^n$ and using equation \eqref{syy}, we obtain $G_k(1)=G'_k(1)$. 
We have known $F_k(1)=F'_k(1)$, it follows that $h_k(\a_k)=f_k(\a_k)$ and $h_k(t_k)=f(t_k)$ for $1\le k\le m$.

Now it remains to prove $V\cong V'$. Comparing the coefficients of $\lambda_k^n$ in \eqref{yy} and using Lemma \ref{crucial}, we have
\begin{equation}\label{yyy}
\phi(1\otimes\cdots\otimes s_k\otimes\cdots\otimes1\otimes v)=1\otimes\cdots\otimes s_k\otimes\cdots\otimes1\otimes \tau(v)
\end{equation}
for all $1\le k\le m$. For any $n\in\Z$, by equations \eqref{syy} and \eqref{yyy}, we see that
\begin{align*}
 &1\otimes\cdots\otimes1\otimes \tau(L_nv)-1\otimes\cdots\otimes1\otimes L_n\tau(v)\\
=&\sum_{k=1}^{m}1\otimes\cdots\otimes\lambda_k^n\big(s_k+nG'_k(1)-n^2\alpha_k F'_k(1)\big)\otimes\cdots\otimes1\otimes\tau(v)\\
 &-\sum_{k=1}^{m}\phi\big(1\otimes\cdots\otimes\lambda_k^n(s_k+nG_k(1)-n^2\alpha_kF_k(1))\otimes\cdots\otimes1\otimes v\big)\\
=&0,
\end{align*}
which implies $\tau(L_nv)=L_n\tau(v)$. 
Similarly, using equation \eqref{syy}, we deduce that $\tau(W_nv)=W_n\tau(v)$ for any $n\in\Z$. 
Finally, it is easy to see that $\tau(Cv)=C\tau(v)$, and we conclude that the linear map $\tau: V\to V'$ is an isomorphism of $\mathcal{W}$-modules.
\end{proof}

\section{New simple modules}
Throughout this section, we will compare the simple tensor product modules $\mathbf{T}$ obtained in the previous section with other known simple $\mathcal{W}$-modules in the literature, demonstrating that these simple modules are generically new. 
The \textbf{main result} of this section is the following statement.
\begin{thm}
	The simple tensor product $\mathcal{W}$-module $\mathbf{T}$ is new except when $m=1$ and $V=\C$ is the trivial module.
\end{thm}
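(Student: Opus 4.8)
The plan is to show that $\mathbf{T}$ cannot coincide with any of the previously classified simple $\mathcal{W}$-modules by extracting intrinsic invariants that distinguish it, with the sole exceptional case being $m=1$, $V=\C$, where $\mathbf{T}=\Omega(\lambda,\a,h)$ is already a known module. The natural invariants to exploit are the weight-space structure and the action of the center $C$. First I would observe that since each $\Omega(\lambda_k,\a_k,h_k)$ is a non-weight module (the operators $L_0$, $W_0$ act non-diagonalizably, as can be read off from the formulas $L_0(s^pt^q)=s^pt^q\cdot s$ up to the polynomial twist), the tensor product $\mathbf{T}$ is itself a \emph{non-weight} module with respect to the Cartan-type subalgebra $\C L_0\oplus\C W_0\oplus\C C$. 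This immediately rules out all the weight modules in the literature: the Harish-Chandra modules classified via the $A$-cover theory of Billig--Futorny, and the weight modules with finite- or infinite-dimensional weight spaces studied by Zhang--Dong and Radobolja. Hence the comparison reduces to the non-weight simple modules, namely the restricted modules (including highest-weight and Whittaker modules) of \cite{Chen&Hong&Su,Wang&Li,Zhang&Dong} and the modules $\Omega(\lambda,\a,h)$ of \cite{Chen&Guo}.

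Next I would dispose of the restricted modules. The key point is that $\mathbf{T}$ is \emph{not} a restricted module once any $\Omega(\lambda_k,\a_k,h_k)$ factor is present: for a fixed basis vector, applying $W_n$ for large $n$ produces terms scaling like $\lambda_k^n$, so the vectors $W_n g$ do not eventually vanish. Concretely, the invariant $R_g$ and $R_{\mathbf{T}}$ introduced before Theorem~\ref{iso} already detect this — for a restricted module one would have $W_n v=0$ for $n\gg0$, forcing $R_v=1$, whereas the Proposition preceding Theorem~\ref{iso} shows $R_{\mathbf{T}}=m+1\ge 2$. This gives a clean numerical obstruction separating $\mathbf{T}$ from every simple restricted module, and in particular from the simple highest-weight and Whittaker modules.

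It remains to compare $\mathbf{T}$ with the modules $\Omega(\lambda,\a,h)$ themselves, and this is where the exceptional case $m=1$, $V=\C$ enters. When $m=1$ and $V=\C$, we have $\mathbf{T}\cong\Omega(\lambda_1,\a_1,h_1)$ by construction, so $\mathbf{T}$ is not new. Conversely, I would argue that in all other cases $\mathbf{T}\not\cong\Omega(\lambda,\a,h)$. The cleanest route is again the invariant $R_{\mathbf{T}}$: a single module $\Omega(\lambda,\a,h)$ is the case $m=1$, $V=\C$, for which $R_{\mathbf{T}}=2$ (here $\mathbf{N}=\C[t]\otimes\C$), whereas for $m\ge 2$ one has $R_{\mathbf{T}}=m+1\ge 3$, and Theorem~\ref{iso} already shows that distinct data $(\lambda_k,\a_k,h_k)$ or a nontrivial $V$ yield non-isomorphic modules. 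Thus if $m\ge2$, or if $m=1$ but $V\neq\C$ (so that $V$ contributes a genuinely larger module, detectable because $\tau:V\to V'$ must be an isomorphism by Theorem~\ref{iso} applied with $\mathbf{T}'=\Omega(\lambda,\a,h)$ forcing $V\cong\C$), the module $\mathbf{T}$ is not isomorphic to any $\Omega(\lambda,\a,h)$.

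The main obstacle I anticipate is the comparison against restricted modules whose internal structure is only described conceptually in \cite{Chen&Hong&Su}: I must ensure the invariant $R_{\mathbf{T}}$ is genuinely well-defined and isomorphism-invariant (it is, since $W_n$ is intrinsic to the $\mathcal{W}$-action), and that \emph{every} simple restricted module has $R=1$ on some nonzero vector — this should follow directly from the defining finiteness condition $\mathcal{W}_n v=0$ for $n\gg0$. A secondary subtlety is verifying that the list of "known" simple modules is exhaustively covered by the weight/non-weight and restricted/non-restricted dichotomy; I would organize the final argument as a case analysis over these classes so that the single numerical invariant $R_{\mathbf{T}}$, together with Theorem~\ref{iso}, closes each case, with the trivial exceptional identification $\mathbf{T}=\Omega(\lambda,\a,h)$ singled out explicitly.
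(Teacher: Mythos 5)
Your proposal is correct, and it diverges from the paper's proof at the key step. For ruling out the simple restricted modules, the paper computes $W_n(1\otimes\cdots\otimes1\otimes v)$ explicitly and observes it is non-zero for all large $n$, which is the same idea as your $R$-invariant argument ($R=1$ for any restricted module versus $R_{\mathbf{T}}=m+1\ge 2$), just phrased less uniformly. The real difference is in the comparison with $\O(\lambda,\a,h)$ for $m\ge2$: the paper introduces the element $Q=\a^{-2}(W_0^2-W_{-1}W_1)\in U(\mathcal{W})$, shows it acts as the identity on $\O(\lambda,\a,h)$, and then checks that the coefficient of $t_1t_2$ in $Q(1\otimes\cdots\otimes1\otimes v)$ equals $\a^{-2}(2-\lambda_1\lambda_2^{-1}-\lambda_2\lambda_1^{-1})v\ne0$, a contradiction. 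You instead reuse the invariant $R$: since $\O(\lambda,\a,h)\cong\O(\lambda,\a,h)\otimes\C$ is itself a tensor module with $m=1$ and trivial $V$, the Proposition gives $R_{\O(\lambda,\a,h)}=2$ while $R_{\mathbf{T}}=m+1\ge3$. This is a legitimate and arguably cleaner route, as it runs the whole case analysis through a single isomorphism invariant already established in the paper, whereas the paper's $Q$-argument is an independent ad hoc computation; the paper's approach has the mild advantage of not relying on the standing hypothesis (pairwise distinct $\lambda_k$) under which the Proposition on $R_g$ is proved, though that hypothesis is automatic here by Theorem \ref{sim}. Your treatment of the exceptional case $m=1$ (trivial $V$ gives $\mathbf{T}\cong\O(\lambda_1,\a_1,h_1)$; non-trivial $V$ is excluded by Theorem \ref{iso}) coincides with the paper's. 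One small omission: the paper's list of known non-weight simple modules also includes $\O(\lambda,\a)=\C[s]$, on which $W_n$ acts trivially; you do not mention it, but your invariant disposes of it at once since $R_{\O(\lambda,\a)}=1<m+1$. You should state that case explicitly for completeness.
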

\begin{proof}
First, recall that a $\mathcal{W}$-module $M$ is called a weight module if $M=\bigoplus_{\beta\in\C}M_{\beta}$, where $M_{\beta}=\left\{m\in M\mid L_0m=\beta m\right\}$. 
By the action of $L_0$, for any $0\ne g\in\mathbf{T}$, we have $\deg(L_0g)\succ\deg(g)$. 
It follows that $\mathbf{T}$ is not a weight module.

The literature contains two types of non-weight simple $\mathcal{W}$-modules: 
the simple restricted modules constructed in \cite{Wang,Wang&Li,A&R,Chen&Hong&Su}, and the modules $\O(\lambda,\a)$ together with the modules $\O(\lambda,\a,h)$ with $\lambda,\a\in\C^*$. 

By the action of $W_n$, we see that
\begin{equation*}
W_n(1\otimes\cdots\otimes1\otimes v)=\sum_{k=1}^{m}\lambda_k^n(1\otimes\cdots\otimes t_k\otimes\cdots\otimes1\otimes v)+1\otimes\cdots\otimes1\otimes\left(W_nv-n\sum_{k=1}^{m}\a_k\lambda_k^nv\right)
\end{equation*}
for $0\ne v\in V$, which implies $\mathbf{T}$ is not isomorphic to any simple restricted module in \cite{Wang,Wang&Li,A&R,Chen&Hong&Su}.
Note that $W_n$ acts on $\O(\lambda,\a)$ trivially. It follows that $\mathbf{T}$ is not isomorphic to $\O(\lambda,\a)$.

Let
\begin{equation*}
	Q=\a^{-2}(W_0^2-W_{-1}W_{1})\in U(\mathcal{W}).
\end{equation*}
Then for $f(s,t)\in \O(\lambda,\a,h)$, we have $Qf(s,t)=\a^{-2}t^2f(s,t)-\a^{-2}(t+\a)(t-\a)f(s,t)=f(s,t)$, i.e. $Q$ acts on $\O(\lambda,\a,h)$ as $\mathbf{id}_{\C[s,t]}$.

Now, let us prove the module $\mathbf{T}$ is not isomorphic to any $\O(\lambda,\a,h)$ with $\lambda,\a\in\C^*$ provided that $m\ge2$.
Suppose $\varphi:\mathbf{T}\to\O(\lambda,\a,h)$ is an isomorphism of $\mathcal{W}$-modules.
Then, by the action of $Q\in U(\mathcal{W})$, we have
\begin{equation*}
\varphi(1\otimes\cdots\otimes1\otimes v)=Q\varphi(1\otimes\cdots\otimes1\otimes v)=\varphi\left(Q(1\otimes\cdots\otimes1\otimes v)\right)
\end{equation*}
for $v\in V$.
It follows that $1\otimes\cdots\otimes1\otimes v=Q(1\otimes\cdots\otimes1\otimes v)$ for every $v\in V$. 
If we identify $\mathbf{T}$ with $V[s_1,\ldots,s_m,t_1,\ldots,t_m]$, then by direct computations, 
we observe that the coefficient of $t_1t_2$ in $Q(1\otimes\cdots\otimes1\otimes v)$ is
\begin{equation*}
\a^{-2}(2-\lambda_1\lambda_2^{-1}-\lambda_2\lambda_1^{-1})v\in V,
\end{equation*}
which is non-zero since $\lambda_1\ne\lambda_2$. 
This leads to a contradiction. 

Now, we proceed to consider the case $m=1$.
If $V=\C$ is the trivial $\mathcal{W}$-module, then we certainly have $\mathbf{T}\cong \O(\lambda_1,\a_1,h_1)$.
By Theorem \ref{iso}, we see that $\mathbf{T}$ cannot be isomorphic to any $\O(\lambda,\a,h)$ if $V$ is not the trivial module.
In conclusion, the theorem is valid.
\end{proof}

\section*{Acknowledgements}
%We thank the referees for providing constructive comments and help in improving this paper.
This paper is partially supported by the National Key R\&D Program of China (2024YFA1013802), 
the National Natural Science Foundation of China (11931009, 12101152, 12161141001, 12171132, 12401030 and 12401036), 
the Innovation Program for Quantum Science and Technology (2021Z D0302902), 
and the Fundamental Research Funds for the Central Universities of China.


\begin{thebibliography}{99}
\bibitem{AR} D. Adamović, G. Radobolja.
Free field realization of the twisted Heisenberg-Virasoro algebra at level zero and its applications.
\textit{J. Pure Appl. Algebra} 219 (2015), no. 10, 4322--4342.

\bibitem{A&R} D. Adamović, G. Radobolja.
On free field realizations of $W(2,2)$-modules.
\textit{SIGMA Symmetry Integrability Geom. Methods Appl.} 12 (2016), Paper No. 113, 13 pp.

\bibitem{B&F} Y. Billing, V. Furtony.
Classification of irreducible representations of Lie algebra of vector fields on a torus.
\textit{J. Reine Angew. Math.} 720 (2016), 199--216.

\bibitem{C&L&W} Y. Cai, R. L\"{u}, Y. Wang.
Classification of simple Harish-Chandra modules for map (super) algebras related to the Virasoro algebra.
\textit{J. Algebra} 570 (2021), 397--415.

\bibitem{Chen&Hong&Su} H. Chen, Y. Hong, Y. Su.
A family of new simple modules over the Schrödinger-Virasoro algebra.
\textit{J. Pure Appl. Algebra} 222 (2018), no. 4, 900--913.

\bibitem{C&G} H. Chen, X. Guo.
A new family of modules over the Virasoro algebra.
\textit{J. Algebra} 457 (2016), 73--105.

\bibitem{Chen&Guo} H. Chen, X. Guo.
Non-weight modules over the Heisenberg-Virasoro algebra and the $W$ algebra $W(2,2)$.
\textit{J. Algebra Appl.} 16 (2017), no. 5, 1750097, 16 pp.

\bibitem{Chen&Li} H. Chen, J. Li.
Left-symmetric algebra structures on the $W$-algebra $W(2,2)$.
\textit{Linear Algebra Appl.} 437 (2012), no. 7, 1821--1834.

\bibitem{Gao&Jiang&Pei} S. Gao, C. Jiang, Y. Pei.
Derivations, central extensions and automorphisms of a Lie algebra.
\textit{Acta Math. Sinica (Chinese Ser.)} 52 (2009), no. 2, 281--288.

\bibitem{MU} M. Henkel, R. Schott, S. Stoimenov, J. Unterberger.
The Poincaré algebra in the context of ageing systems: Lie structure, representations, Appell systems and coherent states.
\textit{Confluentes Math.} 4 (2012), no. 4, 1250006, 23 pp.

\bibitem{Jiang&Pei&Zhang} W. Jiang, Y. Pei, W. Zhang.
Determinant formula and a realization for the Lie algebra $W(2,2)$.
\textit{Sci. China Math.} 61 (2018), no. 4, 685--694.

\bibitem{Liu&Gao&Zhu} D. Liu, S. Gao, L. Zhu.
Classification of irreducible weight modules over $W$-algebra $W(2,2)$.
\textit{J. Math. Phys.} 49 (2008), no. 11, 113503, 6 pp.

\bibitem{Ma&Zhao} V. Mazorchuk, K. Zhao.
Simple Virasoro modules which are locally finite over a positive part.
\textit{Sel. Math. New Ser.} 20 (2014) 839--854. 

\bibitem{Radobolja} G. Radobolja.
Subsingular vectors in Verma modules, and tensor product of weight modules over the twisted Heisenberg-Virasoro algebra and $W(2,2)$ algebra.
\textit{J. Math. Phys.} 54 (2013), no. 7, 071701, 24pp.

\bibitem{Tan&Zhao} H. Tan, K. Zhao.
Irreducible Virasoro modules from tensor products (II).
\textit{J. Algebra} 394 (2013), 357--373.

\bibitem{Tang} X. Tang.
2-local derivations on the $W$-algebra $W(2,2)$.
\textit{J. Algebra Appl.} 20 (2021), no. 12, Paper No. 2150237, 13 pp.

\bibitem{Wang} B. Wang.
Whittaker modules for graded Lie algebras.
\textit{Algebr. Represent. Theory} 14 (2011), no. 4, 691--702.

\bibitem{Wang&Li} B. Wang, J. Li.
Whittaker modules for the $W$-algebra $W(2,2)$. arXiv:0902.1592.

\bibitem{Wu&Gao&Liu} Q. Wu, S. Gao, D. Liu.
Local derivations on the Lie algebra $W(2,2)$.
\textit{Linear Multilinear Algebra} 72 (2024), no. 4, 631--643.

\bibitem{Zhang&Dong} W. Zhang, C. Dong.
$W$-algebra $W(2,2)$ and the vertex operator algebra $L(\frac{1}{2},0)\otimes L(\frac{1}{2},0)$.
\textit{Comm. Math. Phys.} 285 (2009), no. 3, 991--1004.

\bibitem{Zhang&Tan} X. Zhang, S. Tan.
$\theta$-unitary representations for the $W$-algebra $W(2,2)$.
\textit{Linear Multilinear Algebra} 60 (2012), no. 5, 533--543.
\end{thebibliography}
\end{document}